\theoremstyle{definition}
\newtheorem{theorem}{Theorem}[section]
\newtheorem{lemma}[theorem]{Lemma}
\newtheorem{proposition}[theorem]{Proposition}
\theoremstyle{definition}
\newtheorem{definition}[theorem]{Definition}
\newtheorem{example}[theorem]{Example}
\newtheorem{remark}[theorem]{Remark}
\definecolor{blue-url}{RGB}{0,0,100}
\definecolor{red-url}{RGB}{100,0,0}
\definecolor{green-url}{RGB}{0,100,0}
\definecolor{light-yellow}{RGB}{255,255,128}
\definecolor{light-blue}{RGB}{193,255,255}
\definecolor{light-red}{RGB}{239,83,80}
\renewcommand{\emptyset}{\varnothing}
\renewcommand{\setminus}{\smallsetminus}
\renewcommand{\,}{\kern 0.1em}
\providecommand\llb{\llbracket}
\providecommand\rrb{\rrbracket}
\providecommand{\RR}{\mathbin{\Cal R}}
\newcommand{\Mon}{\mathsf{Mon}}
\newcommand{\FaMon}{\mathsf{FaMon}}
\newcommand{\AtoMon}{\mathsf{AtoMon}}
\newcommand{\evid}[1]{\textsf{#1}}
\newcommand{\HH}{\mathsf{C}_{\mathcal H}}
\newcommand{\gword}{(a_1,i_1)\ast (a_2,i_2) \ast \cdots \ast (a_n,i_n)} 
\newcommand{\FC}{\mathscr{F}(\sqcup\, \mathcal H)} 
\newcommand{\Cal}[1]{\mathcal{#1}}
\newcommand{\PA}{\mathsf{P}_\mathcal{H}} 
\newcommand{\UPA}{\mathscr{U}_\Cal H} 
\newcommand{\APA}{\mathscr{A}_\Cal H} 
\newcommand{\pullbackcorner}[1][dr]{\save*!/#1-1.2pc/#1:(-1,1)@^{|-}\restore} 
\DeclareFontFamily{OMX}{MnSymbolE}{}
\DeclareSymbolFont{MnLargeSymbols}{OMX}{MnSymbolE}{m}{n}
\DeclareFontShape{OMX}{MnSymbolE}{m}{n}{
	<-6>  MnSymbolE5
	<6-7>  MnSymbolE6
	<7-8>  MnSymbolE7
	<8-9>  MnSymbolE8
	<9-10> MnSymbolE9
	<10-12> MnSymbolE10
	<12->   MnSymbolE12
}{}
\DeclareFontShape{OMX}{MnSymbolE}{b}{n}{
	<-6>  MnSymbolE-Bold5
	<6-7>  MnSymbolE-Bold6
	<7-8>  MnSymbolE-Bold7
	<8-9>  MnSymbolE-Bold8
	<9-10> MnSymbolE-Bold9
	<10-12> MnSymbolE-Bold10
	<12->   MnSymbolE-Bold12
}{}
\let\llangle\@undefined
\let\rrangle\@undefined
\DeclareMathDelimiter{\llangle}{\mathopen}%
{MnLargeSymbols}{'164}{MnLargeSymbols}{'164}
\DeclareMathDelimiter{\rrangle}{\mathclose}%
{MnLargeSymbols}{'171}{MnLargeSymbols}{'171}
\begin{document}

\title{The Category of Atomic Monoids: \\ Universal Constructions and Arithmetic Properties}

\author{Federico Campanini}
\address{(F.~Campanini) Institut de Recherche en Math\'ematique et Physique, Universit\'e Catholique de Louvain | Chemin du cyclotron 2, 1348 Louvain-la-Neuve, Belgium}
\email{federico.campanini@uclouvain.be}
\urladdr{https://uclouvain.be/fr/repertoires/federico.campanini}

\author{Laura Cossu}
\address{(L.~Cossu) Dipartimento di Matematica e Informatica, Università degli Studi di Cagliari | Palazzo delle Scienze, Via Ospedale 72, 09124 Cagliari (Italy)}
\email{laura.cossu3@unica.it}
\urladdr{https://sites.google.com/view/laura-cossu/}

\author{Salvatore Tringali}
\address{(S.~Tringali) School of Mathematical Sciences, Hebei Normal University | Shijiazhuang, Hebei province, 050024 China}
\email{salvo.tringali@gmail.com}
\urladdr{http://imsc.uni-graz.at/tringali}

\subjclass[2020]{Primary 18A05, 18A30, 20M10, 20M13 Secondary 13A05}
\keywords{Atomic monoids, coproducts, products, limits, colimits, length sets, unions of length sets}

\begin{abstract}
We introduce and investigate the category $\mathsf{AtoMon}$ of atomic monoids and atom-preserving monoid homomorphisms, which is a (non-full) subcategory of the usual category of monoids. 
In particular, we compute all limits and colimits, showing that $\mathsf{AtoMon}$ is a complete and cocomplete category. We also address certain arithmetic properties of products and coproducts, providing explicit formulas for some fundamental invariants associated with factorization lengths in atomic monoids.
\end{abstract}

\maketitle

\section{Introduction}

By the fundamental theorem of arithmetic, every integer greater than one is a product of primes in an essentially unique fashion. Going back to Euclid's early work on elementary number theory, this classical result has been greatly generalized over the centuries. One of the most basic ideas is been to replace the positive integers with a (multiplicatively written) monoid $H$ (see Sect.~\ref{sec:2.2} for some preliminaries on monoids). Accordingly, the role played by the primes is taken over by the atoms of $H$, where an \evid{atom} is a non-unit that does not factor as a product of two non-units. It is then possible to prove that, under certain conditions, every non-unit of $H$ factors as a (non-empty, finite) product of atoms, which is concisely expressed by saying that $H$ is an \evid{atomic} monoid and each of its non-units has an \evid{atomic factorization}. In general, such factorizations are far from being essentially unique in any sensible way (most notably, they need not be unique up to associates and permutations). In the past, this lack of uniqueness was viewed as a pathology (for example, in connection with the rise of algebraic number theory and the study of unique factorization in number rings). Over time, perspectives have, however, greatly changed and atomic factorizations, \textit{especially} when non-unique, are nowadays recognized as the trigger of a variety of interesting phenomena whose investigation has eventually led to the birth of  (\textit{classical}) factorization theory \cite{GeHK06}, where the focus is on commutative domains and cancellative commutative monoids.

Factorization theory is a subfield of algebra at the crossroad of various subjects, including ring theory, semigroup (and group) theory, and combinatorics. Its origins can be traced back to the late 1960s, when Narkiewicz initiated a systematic investigation of non-unique factorization phenomena in number rings. This study expanded into commutative algebra in the 1990s, 
eventually leading to the publication of Geroldinger and Halter-Koch's monograph \cite{GeHK06}. 
Over the past 15 years, there has been significant progress towards the extension of theory to the non-commutative cancellative setting \cite{Be-He-Le17, B-B-N-S23a, Ba-Sm15, Sm-Vo19a} and further \cite{Fa02, Fa-Wi04, Cam-Fac2018, An-Tr-2021, Cos-Tri-2021, Tr21(b)}.
In the meantime, the classical theory has also drawn new impetus from the work of Chapman, Coykendall, Gotti, and others, particularly in relation to monoid algebras \cite{C-C-G-S21, Co-Ha18, Co-Go19, Go18a,Go19a,Got-Li-2023, Ger-Got-2025}. 

In the present paper, we embrace a categorical approach to the study of factorization (see Sect.~\ref{sec:cats} for a quick review of some elementary aspects of category theory). More in detail, let $\Mon$ be the usual category of monoids and monoid homomorphisms, and let $\AtoMon$ be the subcategory of $\Mon$ whose objects are the \evid{atomic} monoids (that is, the monoids in which every non-unit factors as a finite product of atoms) and whose morphisms are the monoid homomorphisms $f \colon H \to K$ that map an atom of $H$ to an atom of $K$ (namely, the \evid{atom-preserving} morphisms of $\Mon$). 

In Section~\ref{sec:basics-on-atomon}, after introducing some fundamental properties of atomic monoids and atom-preserving monoid homomorphisms, we obtain a few basic structural results on the category $\AtoMon$. 

Section~\ref{sec: coproducts} focuses on coproducts. Besides establishing their existence, we show in Theorem~\ref{thm:non-empty-coproducts} that a canonical representative of the coproduct of a family of atomic monoids in $\AtoMon$ is given by their free product. This requires some non-trivial work and is a key step towards a more general principle, as we find in Sect.~\ref{sec: limits and colimits}
that all colimits in $\AtoMon$ exist and are computed in the same way as in $\Mon$.

Products, discussed in Section~\ref{sec: products}, are quite a different case.
In addition to their existence, we demonstrate in Theorem~\ref{thm:non-empty-products} that a canonical representative of the product object of a family $\{H_i\}_{i\in I}$ of atomic monoids, indexed by a non-empty set $I$, is not the direct product of the $H_i$'s (as in $\Mon$), but rather the submonoid of the direct product generated by the $I$-tuples $(x_i)_{i \in I}$ such that either all components are units (in their respective monoids), or all components are atoms.

Lastly, in Section \ref{sec: limits and colimits}, we construct  equalizers and coequalizers in $\AtoMon$, thus proving that the category is complete (Theorem \ref{thm: complete}) and cocomplete (Theorem \ref{thm: cocomplete}).

The existence and (explicit) description of limits and colimits in $\AtoMon$ are entirely new even from the perspective of the classical theory of factorization, and we expect these constructions to provide a novel and powerful tool for addressing some central problems in the field. In particular, products and coproducts, with the high degree of flexibility inherent in their construction, have great potential in addressing certain realization problems for length sets and closely related invariants (e.g., see \cite[Problem B]{Ge-Zh20},  \cite[Theorem 1.1]{Ge-Zh22}, and \cite[Theorem 1.1]{Ger-Got-2025}). First steps in this direction are undertaken in Theorems \ref{thm: s.s.o.l. coproduct} and \ref{thm: unions of s.o.l. coproduct} for coproducts, and in Theorems \ref{thm: s.s.o.l. product} and \ref{thm: unions product}
for products.

\section{Preliminaries}
\label{sec:2}
In this section, we establish notation and terminology used all through the paper. Further notation and terminology, if not explained when first introduced, are standard or should be clear from context.

\subsection{Generalities}\label{sec:2.1}
We assume that all relations are binary. We will generally be informal about the distinction between sets and classes. However, let us clarify from the outset that we adopt Tarski-Grothendieck set theory as the foundation for this work. Other alternatives exist (see, e.g., \cite[Sect.~1.7]{Fa19a}), but this issue is entirely beyond the scope of the paper.

We denote by $\mathbb N$ the non-negative integers, by $\mathbb N^+$ the positive integers, and by $\mathbb Z$ the integers. For all $a, b \in \allowbreak \mathbb Z$, we let $\llb a, b \rrb$ be the \evid{discrete interval} $\{x \in \mathbb Z\colon a \le x \le b\}$. 
We write $|X|$ for the cardinality of a set $X$. If $X_1, \ldots, X_n \subseteq \mathbb Z$, we define $X_1 + \cdots + X_n := \{x_1 + \cdots + x_n : x_1 \in X_1, \ldots, x_n \in X_n\}$.

\subsection{Monoids.}
\label{sec:2.2}
A \evid{monoid} is a semigroup with an identity element. Unless stated otherwise, monoids will typically be written multiplicatively and need not have any special property (e.g., commutativity). For the basics on semigroups and monoids, see \cite{Howie}.

Let $H$ be a monoid with identity $1_H$. An element $u \in H$ is a \evid{unit} if there is a provably unique $v \in H$, called the \evid{inverse} of $u$ and denoted by $u^{-1}$, such that $uv = 1_H = vu$.  We denote by $H^\times$ the set of units of $H$, which turns out to be a subgroup of~$H$ and is therefore named the \evid{group of units} of $H$. A non-unit $a \in H$ is an \evid{atom} if it does not factor as a product of two non-units (that is, $a=xy$ for some $x,y \in H$ implies $x \in H^\times$ or $y \in H^\times$). We denote by $\mathscr A (H)$ the set of atoms of $H$. 

We will consider many special classes of monoids that are relevant to our investigations, which makes it necessary to review some further terminology. Most notably, we say that $H$ is

\begin{itemize}[leftmargin=1cm]
\item \evid{reduced} if the only unit of $H$ is the identity, i.e., $H^\times = \{1_H\}$; 

\item \evid{acyclic} if $yxz\ne x$ for all $x, y, z \in H$ with $y \not\in H^\times$ or $z\not\in H^\times$;

\item \evid{unit-cancellative} if $xy \ne x$ and $yx \ne x$ for all $x, y\in H$ with $y\not\in H^\times$;

\item \evid{cancellative} if $xz \ne yz$ and $zx \ne zy$ for all $x, y, z \in H$ with $x \ne y$;

\item \evid{Dedekind-finite} if $xy = 1_H$ for some $x, y \in H$ implies $yx = 1_H$, or equivalently, if a non-empty product of elements of $H$ is a unit if and only if each factor is a unit \cite[Proposition 2.30]{Fan-Tri-2018}.
\end{itemize}
Acyclic or commutative monoids are Dedekind-finite, as are atomic monoids \cite[Lemma 2.2]{Fan-Tri-2018}.
A fundamental class of cancellative monoids is provided by the non-zero elements of a (commutative or non-commutative) domain under mul\-ti\-pli\-ca\-tion. Clearly, every cancellative monoid is unit-cancellative. The converse is not true; for instance, the non-empty finite subsets of $\mathbb Z$, endowed with the operation of setwise addition induced from the additive group of integers, form an atomic and unit-cancellative monoid that is not cancellative \cite[Proposition 3.5]{Fan-Tri-2018}.
It is also immediate that a commutative monoid is unit-cancellative if and only if it is acyclic. For a cancellative monoid that is not acyclic, see \cite[Example 4.8]{Tri-2022(a)}.

Given $X_1,\, \ldots,\, X_n \subseteq H$, we write $X_1 \cdots X_n$ for the \evid{setwise product} of $X_1$ through $X_n$, that is, the set $\{x_1 \cdots x_n \colon x_1 \in X_1,\, \ldots,\, x_n \in X_n\} \subseteq H$; if $X_i = \{x_i\}$ for some $i \in \llb 1, n \rrb$ and there is no likelihood of confusion, we will commonly replace the set $X_i$ in the product $X_1 \cdots X_n$ with the element $x_i$.

A \evid{monoid congruence} on $H$ is an equivalence relation $\Cal R$ on $H$ such that if $x \RR u$ and $y \RR v$ then $xy \RR uv$. If $\Cal R$ is a monoid congruence on $H$, we usually write $x \equiv y$ or $x \cong y$ in place of $x \RR y$ and say that ``$x$ is congruent to $y$ (modulo $\Cal R\,$)''.

\subsection{Free monoids}\label{sec:2.3} Fix a set $X$. We denote by $\mathscr F(X)$ the free monoid on $X$, that is, the monoid having as elements all finite tuples of elements of $X$ and whose operation is the \evid{con\-cat\-e\-na\-tion} of such tuples. We use the symbol $\ast_X$ for the operation of $\mathscr F(X)$ and we refer to an element of $\mathscr F(X)$ as an \evid{$X$-word}, or simply as a \evid{word} if no confusion can arise. The identity of $\mathscr F(X)$ is the empty tuple, called the \evid{empty $X$-word} and denoted by $\varepsilon_X$. Clearly, $\mathscr F(X)$ is a reduced atomic monoid with $\mathscr{A}(\mathscr F(X))=X$.

Note that if $\mathfrak u$ is a non-empty $X$-word, then $\mathfrak u = u_1 \ast_X \cdots \ast_X u_k$ for some uniquely de\-ter\-mined $k \in \mathbb N ^+$ and $u_1,\, \ldots,\, u_k \in X$. The positive integer $k$ is called the \evid{length} of $\mathfrak u$. By definition, the empty word has zero length. We denote the length of an $X$-word $\mathfrak u$ by $|u|_X$.
We will systematically drop the subscript $X$ from this notation when there is no serious risk of ambiguity.

\subsection{Factorizations and length sets}
\label{subsec:factorizations}
Let $H$ be a monoid with set of atoms $\mathscr A(H)$, and consider the (unique) morphism $\pi_H \colon \mathscr F(H)\to H$ from the free monoid on $H$ into $H$ that maps an $H$-word $\mathfrak a = a_1\ast \cdots \ast a_n$ to the element $a_1\cdots a_n \in H$. We define an \evid{atomic factorization} of an element $x \in H$ as an $\mathscr A(H)$-word $\mathfrak a$ such that $\pi_H(\mathfrak a)=x$. Then, we set 
$$
\mathcal Z_H(x):= \{\mathfrak a \in \mathscr F (\mathscr A(H)) : \mathfrak a \text{ is an atomic factorization of } x \}
$$
and 
$$
\mathsf L_H(x):= \{k \in \mathbb N \colon |\mathfrak a|_H = k, \text{ for some } \mathfrak a \in \mathcal Z_H(x)\}.
$$
We refer to $\mathsf L_H(x)$ as the \evid{length set} (or \evid{set of lengths}) of $x$ (relative to the atoms of $H$). Notice that $\Cal Z_H(1_H) = \allowbreak \{\varepsilon_{\mathscr A(H)}\}$ and $\Cal Z_H(u)=\emptyset$ for all $u \in H^\times \setminus \{1_H\}$, with the result that $\mathsf L_H(1_H) = \{0\}$ and $\mathsf L_H(u) = \allowbreak \emptyset$. Moreover, it is clear that $\mathsf L_H(x) = \{1\}$ if and only if $x \in \mathscr A(H)$. We also define
\[
\mathscr L(H) := \{ \mathsf L_H(x) \colon x \in H \} \setminus \{\emptyset\} 
\qquad\text{and}\qquad
\mathscr L^+(H) := \mathscr L(H) \setminus \{\{0\}\};
\]
we call $\mathscr L(H)$ and $\mathscr L^+(H)$ the \evid{system of length sets} and the \evid{system of non-zero length sets} of $H$, respectively.
Finally, given a non-negative integer $k$, we define the \evid{union of length sets of $H$ containing $k$} by
\begin{equation}\label{def:unions-of-length-sets}
\mathscr{U}_k(H) := \bigcup\,\{L \in \mathscr{L}(H) \colon k\in L\}.
\end{equation}

An elementary property of length sets that is useful to keep in mind for the remainder is that, if $H$ is a monoid and $x \in H$ is a non-unit, then $\mathsf{L}_H(x) = \mathsf{L}_H(uxv)$ for all $u, v\in H^\times$ (see Lemma 2.2(iv) in \cite{Fan-Tri-2018}).

\subsection{Categories}\label{sec:cats} We refer to \cite{Borceux} for generalities on category theory. In fact, we will only deal with very basic categorical notions: monomorphisms, functors and adjunctions, limits (products, pullbacks, and equalizers), and colimits (coproducts, pushouts, and coequalizers).  

In particular, we recall that a category is \evid{complete} (resp., \evid{cocomplete}) if it admits all small limits (resp., small colimits). It is well known that a category is complete if and only if it admits all small products (including the empty product, which coincides with the terminal object) and all equalizers of pairs of arrows \cite[Theorem~2.8.1]{Borceux}. The dual statement holds for cocomplete categories.

\section{The category of atomic monoids}
\label{sec:basics-on-atomon}

Throughout, we denote by $\textsf{Mon}$ the usual category of monoids and monoid homomorphisms, and we define $\AtoMon$ as the subcategory of $\textsf{Mon}$  whose objects are the \emph{atomic} monoids and whose morphisms are the atom-preserving homomorphisms between atomic monoids, where a monoid homomorphism $f \colon H \to K$ is \evid{atom-preserving} if $f(a) \in \mathscr{A}(K)$ for every $a \in \mathscr{A}(H)$. The first properties of the category $\AtoMon$ are collected in this section.

\begin{remark}\label{rem: AtoMon basics}
\begin{enumerate}[label=\textup{(\roman{*})}, wide, labelwidth=0pt, labelindent=0pt]
\item\label{rem: AtoMon basics i}
    We recall from Section~\ref{sec:2.2} that every atomic monoid $H$ is Dedekind-finite and this ensures that for every element $a \in H$, $a \in \mathscr{A}(H)$ if and only if $uav\in \mathscr{A}(H)$ for all $u, v\in H^\times$.

\item\label{rem: AtoMon basics ii}
   A morphism $f\colon H\to K$ in $\AtoMon$ sends non-units into non-units. Indeed, since $H$ is atomic, every non-unit $h$ of $H$ is a (non-empty) product of atoms, and so is $f(h)$, which is then a non-unit by Dedekind-finiteness. Therefore, for every element $h \in H$, $h \in H^\times$ if and only if $f(h)\in K^\times$. Moreover, if $h \in H \setminus (\mathscr A(H)\cup H^\times)$, then $f(h)\in K \setminus (\mathscr A(K) \cup K^\times)$. Hence, we also have $h \in \mathscr A(H)$ if and only if $f(h) \in \mathscr A(K)$. Therefore, a morphism in $\AtoMon$ sends units into units, atoms into atoms and non-units that are not atoms into non-units that are not atoms.

\item\label{rem: AtoMon basics v}
    The category $\AtoMon$ is not a full subcategory of $\Mon$. In fact, a morphism $\varphi \colon H \to K$ in $\Mon$ between two atomic monoids may fail to send atoms into atoms, hence it may fail to be a morphism in $\AtoMon$. If, for instance, $K$ is the monoid $(\mathbb N, +)$ of non-negative integers under addition and $H$ is a numerical monoid (i.e., a submonoid of $(\mathbb N, +)$ with finite complement in $\mathbb N$) such that $H \subsetneq K$, then the inclusion map $H \to K$ is a monomorphism in $\Mon$, but does not send atoms into atoms.
    
\item \label{rem: AtoMon basics vi}
A morphism $\varphi \colon H \to K$ in $\AtoMon$ is a monomorphism (in $\AtoMon$) if and only if its restriction to $\mathscr{A} (H)\cup H^\times$ is an injective map. It can be proved by a direct computation or by using Proposition~\ref{prop: pullbacks} and the fact that a morphism $\varphi \colon H \to K$ in a category $\Cal C$ is a monomorphism if and only if the square
    $$
    \xymatrix{
    H \ar[r]^-{\text{Id}_H}  \ar[d]_-{\text{Id}_H} & H \ar[d]^-\varphi \\
    H \ar[r]_-\varphi & K
    }
    $$
    is a pullback.
\item 
    We will show in Theorem~\ref{thm: cocomplete} that colimits in $\AtoMon$ are computed as in $\Mon$. Therefore, a morphism $\varphi\colon H \to K$ in $\AtoMon$ is an epimorphism (in $\AtoMon$) if and only if it is an epimorphism in $\Mon$.
    \end{enumerate}
\end{remark}

We identify the initial and terminal objects of the category $\AtoMon$.
\begin{proposition}\label{prop: initial and terminal}
 The initial object of $\AtoMon$ is the trivial monoid $\mathbb{0}:=\{1\}$, while its terminal object is the monoid $\mathbb{1}:=\{1, 0, a\}$ consisting of an identity $1$, an absorbing element $0$, and an element $a$ such that $a^2=0$. In particular, $\AtoMon$ does not have a zero object.
\end{proposition}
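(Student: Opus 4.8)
The plan is to treat the three claims in turn, using throughout the structural facts from Remark~\ref{rem: AtoMon basics}: that every morphism of $\AtoMon$ maps $H^\times$ to $K^\times$, $\mathscr A(H)$ to $\mathscr A(K)$, and the remaining non-units of $H$ to non-units of $K$ that are not atoms, and that an atom of $H$ multiplied on either side by a unit is again an atom. For the initial object, note that $\mathbb{0}=\{1\}$ is (vacuously) atomic, since it has no non-units, and hence an object of $\AtoMon$; for any atomic monoid $H$ the unique monoid homomorphism $\mathbb{0}\to H$, namely $1\mapsto 1_H$, is trivially atom-preserving because $\mathbb{0}$ has no atoms. Thus $\mathbb{0}$ is initial.

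For the terminal object, I would first check that $\mathbb{1}=\{1,0,a\}$ is a well-defined (commutative) monoid --- associativity reduces to a handful of cases, since $0$ is absorbing and $a^2=0$ is the only non-trivial product --- and that $\mathbb{1}^\times=\{1\}$, that $a$ is its unique atom, and that $0=a^2$, so that $\mathbb{1}$ is atomic. Given an atomic monoid $H$, one then partitions it as $H=H^\times\sqcup\mathscr A(H)\sqcup\bigl(H\setminus(\mathscr A(H)\cup H^\times)\bigr)$ and defines $f\colon H\to\mathbb{1}$ by sending these three blocks to $1$, $a$, and $0$ respectively. The substantive point is that $f$ is a monoid homomorphism: for $x,y\in H$ one determines which block $xy$ lies in by a short case analysis on the blocks of $x$ and $y$, where the Dedekind-finiteness of $H$ (a non-empty product is a unit if and only if every factor is, so a product of two non-units is a non-unit and is never an atom) together with Remark~\ref{rem: AtoMon basics}\ref{rem: AtoMon basics i} (a unit times an atom is an atom) pins down the answer, and in each case the outcome agrees with $f(x)f(y)$ computed in $\mathbb{1}$. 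Since $f$ carries $\mathscr A(H)$ into $\{a\}=\mathscr A(\mathbb{1})$, it is a morphism of $\AtoMon$. For uniqueness, any morphism $g\colon H\to\mathbb{1}$ of $\AtoMon$ must, by Remark~\ref{rem: AtoMon basics}\ref{rem: AtoMon basics ii}, send $H^\times$ into $\mathbb{1}^\times=\{1\}$, $\mathscr A(H)$ into $\mathscr A(\mathbb{1})=\{a\}$, and the remaining non-units into $\{0\}$; hence $g=f$, and $\mathbb{1}$ is terminal.

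Finally, $\AtoMon$ has no zero object: such an object would be simultaneously initial and terminal, hence isomorphic in $\AtoMon$ --- and in particular of the same cardinality --- to both $\mathbb{0}$ and $\mathbb{1}$, which is impossible since $|\mathbb{0}|=1\neq 3=|\mathbb{1}|$. The only non-routine step in the argument is the verification that $f$ respects multiplication; this is a finite case analysis whose substance is supplied entirely by the Dedekind-finiteness of atomic monoids and the stability of atoms under multiplication by units.
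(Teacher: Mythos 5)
Your proof is correct and takes essentially the same approach as the paper: there, too, the terminal object $\mathbb{1}$ receives from each atomic monoid $H$ the unique map sending units to $1$, atoms to $a$, and the remaining non-units to $0$, with well-definedness justified by Dedekind-finiteness. Your explicit case analysis for multiplicativity, the uniqueness argument via Remark~\ref{rem: AtoMon basics}\ref{rem: AtoMon basics ii}, and the cardinality argument ruling out a zero object merely spell out the details the paper dismisses as trivial.
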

\begin{proof}
    We only need to prove that $\mathbb{1}:=\{1, 0, a\}$ is the terminal object of $\AtoMon$, the rest is trivial. It is clear from the definition that $\mathbb{1}^\times=\{1\}$ and that $\mathscr{A}(\mathbb{1})=\{a\}$. In particular, $\mathbb{1}$ is an atomic monoid. Moreover, for every $H \in \AtoMon$, there is a unique morphism $H\to \mathbb{1}$ sending all the units of $H$ to $1$, all atoms of $H$ in $a$ and all non-units of $H$ that are not atoms to $0$ (this assignment is well-defined by Dedekind-finiteness).
\end{proof}

\begin{remark}\label{rem: subobjects and submonoids}
    Notice that the notions of ``submonoid'' (in the usual sense) and ``subobject'' are not equivalent in the category $\AtoMon$. Recall that, for a category $\Cal C$, a subobject of a given object $X \in \Cal C$ is an equivalence class of monomorphisms $\alpha \colon A \to X$, where two monomorphisms $\alpha \colon A \to X$ and $\alpha' \colon A' \to X$ are equivalent if there is an isomorphism $\varphi\colon A \to A'$ such that $\alpha=\alpha' \varphi$. As is customary, if $\alpha\colon A \to X$ is a representative of a subobject of $X$, we shall simply say ``$A$ is a subobject of $X$'', omitting the morphism.
    
    Now, let $H,K \in \AtoMon$ with $H$ submonoid of $K$. Then, $H$ may fail to be a subobject of $K$ (that is, there might be no monomorphisms from $H$ to $K$). For instance, if $K$ is the monoid $(\mathbb N, +)$ of non-negative integers and $H$ is its submonoid $K \setminus \{1\}$, then $\mathscr A(K)=\{1\}$ and $\mathscr A(H)=\{2,3\}$, but by Remark~\ref{rem: AtoMon basics}(iv), there are no monomorphisms in $\AtoMon$ from $H$ to $K$ (for every morphism $\varphi:H \to K$ in $\AtoMon$, we have $\varphi(2)=\varphi(3)=1$).
\end{remark}

Since morphisms in $\AtoMon$ send atoms into atoms, the assignment $H \mapsto \mathscr{A}(H)$ defines a functor
\[
\mathscr{A}\colon \AtoMon \to \mathsf{Set},
\]
which is a subfunctor of the forgetful functor $U\colon \AtoMon \to \mathsf{Set}$ (that is, $\mathscr A (H) \subseteq U(H)$ for every $H \in \AtoMon$). We show in the next proposition that $\mathscr{A}$ admits a left adjoint.
\begin{proposition}\label{prop: adjunction}
     The free functor $\mathscr F \colon \mathsf{Set}\to \AtoMon$, sending a set $X$ to the free monoid $\mathscr F(X)$ over $X$, is left adjoint to $\mathscr{A}$. In particular, we have the following commutative diagram of functors, where $\mathscr F'$ is the free functor from $\mathsf{Set}$ to $\Mon$.
    $$
    \xymatrix@!=40pt{
    \AtoMon \ar[rr]_{\mathscr{A}}^\perp \ar@{^(->}[d]_{incl} & & \mathsf{Set} \ar@/_1.3pc/[ll]_{\mathscr F} \ar@/^1.5pc/[lld]^{\mathscr F'} \\
    \Mon \ar[rru]^{U}_[@][flip]\perp & & 
    }
    $$
    Moreover, for the adjunction $\mathscr{A} \vdash \mathscr F$, the $X$-components of the unit $\eta_X\colon X\to \mathscr{A}(\mathscr F (X))$ are the identity maps, while the $H$-components of the counit $\varepsilon_H \colon \mathscr F(\mathscr{A}(H)) \to H$ are given by the restriction of the factorization morphism $\pi_H\colon \mathscr{F}(H)\to H$.
\end{proposition}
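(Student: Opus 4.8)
The plan is to establish the adjunction $\mathscr{F}\dashv\mathscr{A}$ by exhibiting, for every set $X$ and every object $H$ of $\AtoMon$, a bijection
\[
\Phi_{X,H}\colon \AtoMon(\mathscr F(X),H)\ \longrightarrow\ \mathsf{Set}\bigl(X,\mathscr A(H)\bigr),\qquad g\longmapsto g|_X,
\]
that is natural in $X$ and in $H$, and then to read off the unit, the counit, and the commutativity of the displayed diagram from the standard recipe attached to a hom-set adjunction. This requires essentially no computation beyond what is already recorded in Section~\ref{sec:2.3}.

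First I would check that $\mathscr F$ is a well-defined functor $\mathsf{Set}\to\AtoMon$. By Section~\ref{sec:2.3}, $\mathscr F(X)$ is a (reduced) atomic monoid with $\mathscr A(\mathscr F(X))=X$, so it is an object of $\AtoMon$; and for a map $f\colon X\to Y$ the homomorphism $\mathscr F(f)$ sends the one-letter word $x$ to the one-letter word $f(x)$, hence atoms to atoms, so $\mathscr F(f)$ is a morphism of $\AtoMon$. Since, as a monoid, $\mathscr F(X)$ is nothing but the free monoid on $X$, and $\mathscr F$ and $\mathscr F'$ agree both on objects and on morphisms, the left-hand triangle $\mathrm{incl}\circ\mathscr F=\mathscr F'$ of the diagram commutes; the adjunction along the bottom edge is the classical $\mathscr F'\dashv U$.

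Next comes the core of the argument, which is short. A morphism $g\colon\mathscr F(X)\to H$ in $\AtoMon$ is in particular a monoid homomorphism, so by the universal property of the free monoid in $\Mon$ it is uniquely determined by its restriction $g|_X\colon X\to U(H)$, and every set map $X\to U(H)$ arises in this way. Because $\mathscr A(\mathscr F(X))=X$, the requirement that $g$ be atom-preserving is \emph{exactly} the requirement that $g|_X$ take values in $\mathscr A(H)$; conversely a map $X\to\mathscr A(H)$ extends to a homomorphism sending every atom of $\mathscr F(X)$ into $\mathscr A(H)$, hence to a morphism of $\AtoMon$. Thus $\Phi_{X,H}$ is a well-defined bijection. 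Naturality in $X$ (precomposition with $\mathscr F(f)$) and in $H$ (postcomposition with a morphism $\psi\colon H\to K$, which restricts to $\mathscr A(\psi)\colon\mathscr A(H)\to\mathscr A(K)$) is verified on one-letter words and is immediate. This proves $\mathscr F\dashv\mathscr A$. Finally I would identify the unit and counit: the unit at $X$ is $\eta_X=\Phi_{X,\mathscr F(X)}(\id_{\mathscr F(X)})=\id_{\mathscr F(X)}|_X$, i.e.\ the identity map of $X=\mathscr A(\mathscr F(X))$; the counit at $H$ is $\varepsilon_H=\Phi_{\mathscr A(H),H}^{-1}(\id_{\mathscr A(H)})$, that is, the unique monoid homomorphism $\mathscr F(\mathscr A(H))\to H$ extending the inclusion $\mathscr A(H)\hookrightarrow H$. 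Since $\pi_H$ maps a word $a_1\ast\cdots\ast a_n$ to $a_1\cdots a_n$, its restriction to the submonoid $\mathscr F(\mathscr A(H))\subseteq\mathscr F(H)$ also extends that inclusion and is a homomorphism, so by uniqueness $\varepsilon_H=\pi_H|_{\mathscr F(\mathscr A(H))}$.

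As for difficulty, there is no genuine obstacle here: the whole proposition rests on the identity $\mathscr A(\mathscr F(X))=X$, on the atomicity of free monoids, and on the universal property of free monoids in $\Mon$. The only two points deserving a moment's care are (a) the equivalence in the core step translating atom-preservation into landing in $\mathscr A(H)$, which is precisely where $\mathscr A(\mathscr F(X))=X$ is used, and (b) checking that the counit $\pi_H|_{\mathscr F(\mathscr A(H))}$ is a morphism of $\AtoMon$ and not merely of $\Mon$, which holds because it fixes each atom of $H$ and hence is atom-preserving.
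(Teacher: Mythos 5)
Your proposal is correct and takes essentially the same route as the paper: the hom-set bijection $g \mapsto g|_X$, obtained from the universal property of free monoids in $\Mon$ combined with the identity $\mathscr{A}(\mathscr{F}(X)) = X$ to translate atom-preservation into landing in $\mathscr{A}(H)$. You merely spell out a few points the paper leaves implicit (naturality, the unit/counit identification, and that $\varepsilon_H = \pi_H|_{\mathscr{F}(\mathscr{A}(H))}$ is atom-preserving), so no further comparison is needed.
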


\begin{proof}
    In order to show that $\mathscr F$ is left adjoint to $\mathscr{A}$, we want to prove that for any set $X$ and any atomic monoid $H \in \AtoMon$, there is an isomorphism $\hom_{\mathsf{Set}}(X, \mathscr{A} (H)) \cong \hom_\AtoMon(\mathscr F (X), H)$, which is natural both in $X$ and $H$. For any map $f \colon X \to \mathscr{A} (H)$, there is a unique extension to a monoid homomorphism $\hat{f}\colon \mathscr F(X)\to H$, which is clearly a morphism in $\AtoMon$, since the atoms of $\mathscr F(X)$ are precisely the elements of $X$. It is easy to check that this defines the required bijection, whose inverse assigns to any $g\colon \mathscr F(X)\to H$, the restriction $g|_X\colon X \to \mathscr{A}(H)$ (this is well-defined, since $g$ is a morphism in $\AtoMon$).
\end{proof}

\begin{remark}
    We will show in Example \ref{example: coproducts not preserved} that the functor $\mathscr{A}$ does not preserve colimits, hence it does not have a right adjoint (see \cite[Section~3]{Borceux} for a proof of the fact that functors admitting a right adjoint preserve colimits).
\end{remark}

\section{Coproducts}\label{sec: coproducts}
Throughout this section, $\mathcal H = \{H_i\}_{i \in I}$ is a family of atomic monoids indexed by a non-empty (possibly infinite) set $I$, and we denote by $\sqcup \, \mathcal H := \bigsqcup_{i \in I} H_i$ the disjoint union of (the underlying sets of) the monoids $H_i$ in the family $\mathcal H$. We aim to show that the coproduct of the family $\mathcal H$ in $\AtoMon$ is the same as the coproduct of $\mathcal H$ in the category $\Mon$.

We gather from \cite[Section~8.2]{Howie} that the coproduct of the monoids $H_i$ in $\Mon$ is, up to isomorphism, the \evid{free product} of the family $\mathcal H$, which is constructed as follows. We first consider the free monoid $\mathscr F(\sqcup\, \mathcal H)$ over the set $\sqcup \, \mathcal H$. Thus, an element in $\mathscr{F}(\sqcup\, \mathcal H)$ different from the empty word is of the form
    $$
    \mathfrak{a} :=\gword, \quad \text{ where } n \in \mathbb{N}^+,  i_j \in I \text{ and } a_{j}\in H_{i_j} \text{ for all } j\in \llb 1, n\rrb .
    $$
Then, we consider the smallest (semigroup) congruence $\cong$ on the free monoid $\mathscr F(\sqcup\, \mathcal H)$ such that, for all $i \in I$ and $a, b \in H_i$, the $\sqcup\, \mathcal H$-word $(a, i) \ast (b, i)$ is congruent to the $\sqcup\, \mathcal H$-word $(ab, i)$ and the $\sqcup\, \mathcal H$-word $(1_{H_i}, i)$ is congruent to the empty $\sqcup\, \mathcal H$-word $\varepsilon$. The free product $\HH$ is then the quotient of the free monoid $\mathscr F(\sqcup\, \mathcal H)$ by the congruence $\cong$ :
    $$
    \HH:=\FC /{\cong}  .
    $$
As mentioned above, $\HH$ is the (object-part of the) coproduct $\coprod_{i \in I} H_i$ of the family $\mathcal H$ in $\Mon$.
A non-empty word $\gword \in \FC$ is called {\em reduced} if $a_j \neq 1_{H_{i_j}}$ and $i_j \neq i_{j+1}$ for each $j \in \llb 1, n-1 \rrb$, and we let the empty word be reduced by definition. Every element of $\HH$ is uniquely represented by a reduced word. For a word $\mathfrak a \in \FC$, we will denote by $\widehat{\mathfrak a}$ the reduced word that represents the congruence class of $\mathfrak{a}$ in $\HH$ and refer to it as ``the reduced form of (the congruence class of) $\mathfrak a$ in $\HH$''.
In the following, we will often need to compare two (or more) congruent words in order to show some of their properties (e.g. if some of their letters are units or atoms), usually after suitable reductions of some of their parts. For this reason, it will be useful to point out the following results.

\begin{lemma}\label{lem: index-block_decomposition existence}
    Let $\mathfrak{a}=\gword \in \FC$ be a non-empty word.
    \begin{enumerate}[label=\textup(\arabic{*})]
        \item\label{lem: index-block_decomposition existence 1}
        If $\mathfrak a$ is congruent to the empty word, then $a_j \in H_{i_j}^\times$ for every $j\in \llb 1, n\rrb$.
        \item \label{lem: index-block_decomposition existence 2}
        If $\mathfrak a$ is not congruent to the empty word, then there exist $m \in \mathbb{N}^+$ and non-empty words $B_1, B_2, \dots, B_m \in \FC$ such that:
        \begin{enumerate}[label=\textup(\roman{*})]
            \item 
            $B_j\ncong \varepsilon$ for every $j \in  \llb 1, m\rrb$;
            \item
            $\mathfrak{a} \cong B_1\ast \cdots \ast B_m$;
            \item 
            for every $j\in \llb 1, m\rrb$ there exist $r_j\in \mathbb{N}^+$ and $\ell_j \in I$ such that $B_j=(b_{j,1}, \ell_j)\ast\cdots\ast (b_{j, r_j}, \ell_j)$;
            \item  for every $j\in \llb 1, m-1\rrb$, $\ell_j\neq \ell_{j+1}$.
        \end{enumerate}
    \end{enumerate}
\end{lemma}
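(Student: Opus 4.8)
I would establish the two assertions separately.

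\emph{Part (1).} The plan is to produce a monoid homomorphism out of the free monoid $\FC$ that records, letter by letter, whether a letter comes from a unit. Let $M=\{0,1\}$ be the two-element monoid under multiplication (so $1$ is its identity and $0$ is absorbing), define a map $\sqcup\,\mathcal H\to M$ by sending $(b,i)$ to $1$ if $b\in H_i^\times$ and to $0$ otherwise, and let $F\colon\FC\to M$ be its unique extension to a monoid homomorphism. The key observation is that, since each $H_i$ is atomic and hence Dedekind-finite, $ab\in H_i^\times$ if and only if $a\in H_i^\times$ and $b\in H_i^\times$; this is exactly what is needed to verify that $F$ sends the congruent words $(a,i)\ast(b,i)$ and $(ab,i)$ to the same element of $M$, and sends $(1_{H_i},i)$ and $\varepsilon$ both to $1$. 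Hence $F$ is constant on $\cong$-classes. If $\mathfrak a\cong\varepsilon$, then $F(\mathfrak a)=F(\varepsilon)=1$; but $F(\mathfrak a)=F(a_1,i_1)\cdots F(a_n,i_n)$ is a product in $M$, and a product in $M$ equals $1$ only when every factor does, so $a_j\in H_{i_j}^\times$ for all $j\in\llb 1,n\rrb$.

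\emph{Part (2).} I would start from the ``naive'' index-block decomposition of $\mathfrak a$: partition $\llb 1,n\rrb$ into its maximal subintervals on which $j\mapsto i_j$ is constant, so that $\mathfrak a=C_1\ast\cdots\ast C_p$ with every letter of $C_t$ having a single index $\lambda_t$ and $\lambda_t\ne\lambda_{t+1}$. This already gives conditions (ii), (iii) and (iv), but possibly not (i), since some $C_t$ may be congruent to $\varepsilon$. To fix this I run a cleanup on the current list of blocks: while some block is congruent to $\varepsilon$, delete it (legitimate, because $\varepsilon$ is the identity of $\FC$ and $\cong$ is a congruence); while two adjacent blocks share an index, replace the pair by its concatenation (again a one-index word, and here the replacement is a genuine equality in $\FC$). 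Every move strictly decreases the number of blocks, so the procedure terminates; at termination no block is congruent to $\varepsilon$, giving (i), and no two consecutive blocks share an index, giving (iv); conditions (ii) and (iii) are preserved by every move; and $m\ge1$ because $m=0$ would force $\mathfrak a\cong\varepsilon$. Alternatively and more quickly, one may take $B_1,\dots,B_m$ to be the successive letters of the reduced form $\widehat{\mathfrak a}$ (which is nonempty precisely because $\mathfrak a\not\cong\varepsilon$), so that $r_j=1$ for all $j$; then (ii), (iii), (iv) are immediate and (i) holds because a one-letter reduced word is its own reduced form and differs from $\varepsilon$, hence is not congruent to $\varepsilon$ by uniqueness of reduced forms.

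The main thing to get right is the interplay of the two cleanup moves in part (2): deleting a block $C_t\cong\varepsilon$ can bring two same-index blocks into contact, and concatenating those may in turn create a \emph{new} block that is itself congruent to $\varepsilon$ (for instance when the two blocks, over some $H_i$, represent mutually inverse elements), so the cleanup must genuinely be iterated rather than carried out in a single pass; the monovariant ``number of blocks'' is precisely what makes both termination and the resulting exit conditions (hence (i) and (iv)) automatic. In part (1) the only real subtlety is noticing that Dedekind-finiteness — available exactly because the $H_i$ are atomic — is what makes $F$ well-defined modulo $\cong$: without it the equivalence ``$ab$ a unit $\iff$ $a$ and $b$ units'' breaks, and the argument collapses.
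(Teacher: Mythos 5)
Your proposal is correct. For part (2), your ``quicker'' alternative --- taking $B_1,\dots,B_m$ to be the successive letters of the reduced form $\widehat{\mathfrak a}$, with (i) justified by the fact that a single-letter reduced word is its own reduced form and so cannot be congruent to $\varepsilon$ --- is exactly the paper's proof; the iterative block-cleanup you describe first is also sound (the block-count monovariant does what you claim), but it is an unnecessary detour given the availability of reduced forms. For part (1) you take a genuinely different route: the paper argues by induction on the length of $\mathfrak a$, observing that a non-empty word congruent to $\varepsilon$ must admit an elementary reduction (a letter equal to some $1_{H_i}$, or two adjacent letters with the same index --- a step that implicitly leans on uniqueness of reduced forms), and then uses Dedekind-finiteness to split the resulting unit product $a_\ell a_{\ell+1}$ into unit factors; you instead define a homomorphism $F\colon \FC \to \{0,1\}$ (with $\{0,1\}$ multiplicative) detecting unit letters, check that $F$ identifies the generating pairs of $\cong$ --- which is precisely where Dedekind-finiteness of the atomic monoids $H_i$ is needed, as you correctly flag --- and conclude that $F$ is constant on congruence classes, so $F(\mathfrak a)=1$ forces every letter to be a unit. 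Your argument packages the statement as ``having all letters units is a congruence invariant,'' avoiding both the induction and the hidden appeal to reduced forms; the paper's induction is more elementary in its ingredients but requires the case analysis you bypass. Both proofs rest on the same key fact, Dedekind-finiteness of atomic monoids.
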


\begin{proof}
    \ref{lem: index-block_decomposition existence 1} We proceed by induction. For $n=1$, we must have $(a_1,i_1)=(1_{H_{i_1}}, i_1)$ by the definition of the congruence. If $\mathfrak{a}=\gword$ with $n\geq 2$, since $\mathfrak a$ is equivalent to the empty word, two cases can occur.
    
    First case: there exists $\ell\in \llb 1, n\rrb$ such that $a_\ell=1_{H_{i_\ell}}$. Then, $\mathfrak{a}$ is equivalent to the word of length $n-1$ obtained from $\gword$ by replacing the letter $(a_\ell, i_\ell)$ with the empty word. Thus, the inductive hypothesis can be applied.
    
    Second case: there exists $\ell\in \llb 1, n-1\rrb$ such that $i_\ell=i_{\ell+1}$. Then, $\mathfrak{a}$ is equivalent to the word $(a_1, i_1)\ast \cdots \ast (a_\ell a_{\ell+1}, i_\ell) \ast \cdots \ast (a_n, i_n)$ of length $n-1$. By inductive hypothesis, $a_j \in H_{i_j}^\times$ for every $j\in \llb 1, n\rrb \setminus \{\ell\}$ and $a_\ell a_{\ell+1} \in H_{i_\ell}^\times$. Thus, also in this case the conclusion follows (recall that $H_{i_\ell}$ is atomic and hence Dedekind-finite). 

    \ref{lem: index-block_decomposition existence 2} It immediately follows from the existence of the reduced form of $\mathfrak{a}$. Indeed, if $(b_1, i_1)\ast \cdots \ast (b_m, i_m)$ is the reduced form of $\mathfrak{a}$, then it suffices to set $B_j=(b_j,i_j)$.
\end{proof}

\begin{definition}
    Given a word $\mathfrak{a} \in \FC$ not equivalent to the empty word and $B_1, B_2, \dots, B_m \in \FC$ as in Lemma \ref{lem: index-block_decomposition existence}\ref{lem: index-block_decomposition existence 2}, we shall call $B_1\ast B_2\ast \cdots \ast B_m$ an {\em index-block decomposition of $\mathfrak a$}.
\end{definition}
    
    Index-block decompositions are, of course, not unique. However, two index-block decompositions of the same word have the ``same length'' and the ``same blocks up to reduction'', as shown in the following lemma.

\begin{lemma} \label{lem: index-block_decomposition uniqueness}
    Let $\mathfrak{a}=\gword \in \FC$ be a word not congruent to the empty word. If $B_1\ast \cdots \ast B_m$ and $B'_1\ast \cdots B'_\ell$ are two index-block decompositions of $\mathfrak a$, then $m=\ell$ and $B_j\cong B'_j$ for every $j$. In particular, if $\mathfrak a$ is congruent to a single-letter word $(b, k)$, then $a_j \in H_{i_j}^\times$ for every $j\in \llb 1, n\rrb$ such that $i_j\ne k$;
\end{lemma}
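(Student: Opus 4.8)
The plan is to reduce both parts of the statement to the uniqueness of reduced words in $\HH$ recorded in \cite[Section~8.2]{Howie}. Starting from an index-block decomposition $B_1\ast\cdots\ast B_m$ of $\mathfrak a$, I would first collapse each block: since all letters of $B_j=(b_{j,1},\ell_j)\ast\cdots\ast(b_{j,r_j},\ell_j)$ carry the same index $\ell_j$, iterating the relation $(a,i)\ast(b,i)\cong(ab,i)$ gives $B_j\cong(c_j,\ell_j)$ with $c_j:=b_{j,1}\cdots b_{j,r_j}\in H_{\ell_j}$. The key observation is then that $c_j\neq 1_{H_{\ell_j}}$: otherwise $(c_j,\ell_j)\cong\varepsilon$ and hence $B_j\cong\varepsilon$, contradicting condition~(i) of Lemma~\ref{lem: index-block_decomposition existence}\ref{lem: index-block_decomposition existence 2}. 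Together with $\ell_j\neq\ell_{j+1}$, this shows that $(c_1,\ell_1)\ast\cdots\ast(c_m,\ell_m)$ is a \emph{reduced} word congruent to $\mathfrak a$, so by uniqueness it coincides with $\widehat{\mathfrak a}$.

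With this in hand, the uniqueness assertion is immediate: a second index-block decomposition $B_1'\ast\cdots\ast B_\ell'$ of $\mathfrak a$ yields, in exactly the same way, a reduced word $(c_1',\ell_1')\ast\cdots\ast(c_\ell',\ell_\ell')$ equal to $\widehat{\mathfrak a}$; comparing the two reduced words entry by entry (equality now holding in the free monoid $\FC$) forces $\ell=m$, $\ell_j=\ell_j'$ and $c_j=c_j'$ for every $j$, whence $B_j\cong(c_j,\ell_j)=(c_j',\ell_j')\cong B_j'$.

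For the last clause I would not argue through index-block decompositions at all, but use, for each $i\in I$, a ``projection'' homomorphism $\bar\varphi_i\colon\HH\to H_i$. One builds it by defining $\varphi_i$ on the free monoid $\FC$ through $(a,i)\mapsto a$ for $a\in H_i$ and $(a,j)\mapsto 1_{H_i}$ for $a\in H_j$ with $j\neq i$, checking that $\varphi_i$ respects all the generating relations of $\cong$ and hence descends to $\HH$; concretely, $\bar\varphi_i$ sends the class of $(a_1,i_1)\ast\cdots\ast(a_n,i_n)$ to the ordered product $\prod_{j\,:\,i_j=i}a_j$ taken in $H_i$. Now if $\mathfrak a\cong(b,k)$ and $i_{j_0}\neq k$, applying $\bar\varphi_{i_{j_0}}$ to both sides gives $\prod_{j\,:\,i_j=i_{j_0}}a_j=1_{H_{i_{j_0}}}$, a non-empty product (it contains $a_{j_0}$) equal to a unit of $H_{i_{j_0}}$; since $H_{i_{j_0}}$ is atomic, hence Dedekind-finite, every factor of this product is a unit, so in particular $a_{j_0}\in H_{i_{j_0}}^\times$.

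The only point that I expect to require genuine care is the step $c_j\neq 1_{H_{\ell_j}}$, where the distinction between ``being a unit'' and ``being the identity'' matters: if $H_{\ell_j}$ fails to be reduced, a non-identity unit still produces a single-letter word that is reduced and hence \emph{not} congruent to $\varepsilon$, so one must invoke the uniqueness of reduced forms here and not merely Lemma~\ref{lem: index-block_decomposition existence}\ref{lem: index-block_decomposition existence 1}. I also do not expect the final clause to follow formally from the uniqueness statement, since an index-block decomposition need not retain the original letters of $\mathfrak a$; this is why it seems cleaner to route it through the projections $\bar\varphi_i$. Everything else — collapsing blocks, matching two reduced words, and verifying that $\varphi_i$ annihilates the relations $(a,i)\ast(b,i)\cong(ab,i)$ and $(1_{H_i},i)\cong\varepsilon$ — is routine.
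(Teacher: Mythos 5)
Your proof is correct. The uniqueness part is essentially the paper's own argument: the paper notes that $\widehat{B_1\ast\cdots\ast B_m}=\widehat{B_1}\ast\cdots\ast\widehat{B_m}$ and invokes uniqueness of reduced forms, which is exactly what you do, only spelled out — you make explicit that each $\widehat{B_j}$ is a single letter $(c_j,\ell_j)$ with $c_j\neq 1_{H_{\ell_j}}$ (forced by $B_j\not\cong\varepsilon$) and that the constraint $\ell_j\neq\ell_{j+1}$ makes the concatenation reduced; your cautionary remark about units versus the identity is well taken but does not change the route. Where you genuinely diverge is the final clause. The paper deduces it from Lemma~\ref{lem: index-block_decomposition existence}\ref{lem: index-block_decomposition existence 1}, the idea being that once one knows (by the uniqueness just proved) that any index-block decomposition of $\mathfrak a$ consists of a single block of index $k$, the maximal constant-index runs of $\mathfrak a$ with index $\neq k$ must each be congruent to $\varepsilon$, so their letters are units by that lemma; this stays entirely inside the word-combinatorics already developed. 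You instead build, for each $i\in I$, the canonical retraction $\bar\varphi_i\colon\HH\to H_i$ (identity on $H_i$, trivial on the other factors), check it kills the defining relations of $\cong$, and apply it to $\mathfrak a\cong(b,k)$ to conclude via Dedekind-finiteness of the atomic monoid $H_{i_{j_0}}$. Both arguments are sound; yours buys independence from the block machinery (it works directly on the letters of $\mathfrak a$ and would survive even without the uniqueness statement), at the modest cost of introducing and verifying the auxiliary homomorphisms, while the paper's route is shorter given the lemmas already in place.
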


\begin{proof}
    From the definition of the congruence $\cong$, we get that $\widehat{B_1\ast \cdots \ast B_n}=\widehat{B_1}\ast \cdots \ast \widehat{B_n}$. Thus, the result follows from the uniqueness of the reduced form. The last assertion follows from Lemma \ref{lem: index-block_decomposition existence}\ref{lem: index-block_decomposition existence 1}.
\end{proof}

In order to show that the coproduct of the family $\mathcal H$ in $\AtoMon$ is the same as the coproduct of $\mathcal H$ in the category $\Mon$, we first need to determine the units and the atoms of $\HH$.

\begin{lemma}\label{lem: units coproduct}
    The congruence class of an element
    $$
    \gword \in \FC
    $$
    is a unit in $\HH$ if and only if $a_j \in H_{i_j}^\times$ for every $j\in \llb 1, n \rrb$. In particular, $\HH$ is Dedekind-finite.
\end{lemma}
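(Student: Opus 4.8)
The plan is to prove the stated equivalence by its two implications and then derive Dedekind-finiteness as a formal consequence.

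For the ``only if'' direction, suppose the class of $\mathfrak a = \gword$ is a unit of $\HH$; we may assume $n \ge 1$, the case $n = 0$ being vacuous. Picking a word $\mathfrak b \in \FC$ whose class is the inverse of that of $\mathfrak a$, and using that the canonical projection $\FC \to \HH$ is a monoid homomorphism, we get $\mathfrak a \ast \mathfrak b \cong \varepsilon$. Since $\mathfrak a \ast \mathfrak b$ has at least $n \ge 1$ letters, it is a non-empty word congruent to the empty word, so Lemma~\ref{lem: index-block_decomposition existence}\ref{lem: index-block_decomposition existence 1} applies and every letter of $\mathfrak a \ast \mathfrak b$ --- in particular each $(a_j, i_j)$ --- has a unit in its first coordinate; hence $a_j \in H_{i_j}^\times$ for all $j \in \llb 1, n\rrb$.

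For the ``if'' direction, assuming $a_j \in H_{i_j}^\times$ for every $j$, I would exhibit a two-sided inverse at the level of words, namely $\mathfrak b := (a_n^{-1}, i_n)\ast\cdots\ast(a_1^{-1}, i_1)$, and verify $\mathfrak a \ast \mathfrak b \cong \varepsilon$ and $\mathfrak b \ast \mathfrak a \cong \varepsilon$ by an induction on $n$ that repeatedly collapses the innermost pair through the defining relations $(a, i)\ast(a^{-1}, i) \cong (1_{H_i}, i) \cong \varepsilon$; this is legitimate because $\cong$ is a semigroup congruence and hence compatible with concatenation on both sides, so the relations may be applied inside the ambient word. It follows that the class of $\mathfrak a$ is a unit of $\HH$.

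Finally, for Dedekind-finiteness I would use the characterization recalled in Section~\ref{sec:2.2}: a monoid is Dedekind-finite exactly when a non-empty product of its elements is a unit if and only if each factor is. Given $x_1, \dots, x_k \in \HH$ with $k \ge 1$ whose product is a unit, choose words $\mathfrak a_1, \dots, \mathfrak a_k$ representing them; then $\mathfrak a_1 \ast \cdots \ast \mathfrak a_k$ represents $x_1 \cdots x_k$, so by the equivalence just proved every letter of this concatenation, equivalently every letter of each $\mathfrak a_j$, has a unit first coordinate, and applying the equivalence once more yields that each $x_j$ is a unit; the converse implication is immediate since $\HH^\times$ is a subgroup. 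I do not expect any genuine obstacle here: the ``only if'' part is a one-line appeal to Lemma~\ref{lem: index-block_decomposition existence}, and the only point requiring a little care is making the inductive collapse in the ``if'' part precise while keeping track of the trivial edge cases in which some of the words involved are empty.
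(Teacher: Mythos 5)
Your proposal is correct and follows essentially the same route as the paper: the ``only if'' direction by concatenating with a representative of the inverse and invoking Lemma~\ref{lem: index-block_decomposition existence}\ref{lem: index-block_decomposition existence 1}, the ``if'' direction by exhibiting the obvious inverse word (which the paper dismisses as clear), and Dedekind-finiteness as an immediate consequence of the established characterization of units. You simply spell out the steps the paper leaves implicit, and nothing in your argument is flawed.
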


\begin{proof}
    The ``if-part'' is clear. For the other implication it suffices to apply Lemma~\ref{lem: index-block_decomposition existence}\ref{lem: index-block_decomposition existence 1}. The Dedekind-finiteness of $\HH$ is then a trivial consequence.
\end{proof}

\begin{lemma}\label{lem: atom coproduct}
    The equivalence class of an element
    $$
    \gword \in \FC
    $$
    is an atom in $\HH$ if and only if it can be represented by a word $\mathfrak u \ast (a, i)\ast \mathfrak v$, where $a$ is an atom of $H_i$ and $\mathfrak u$ and $\mathfrak v$ are representatives of units of $\HH$. In particular, $\HH$ is an atomic monoid.
\end{lemma}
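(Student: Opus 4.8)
The plan is to isolate one structural fact from which everything follows quickly. Write $[\mathfrak a]$ for the $\cong$-class of a word $\mathfrak a \in \FC$ in $\HH$, and for each $i \in I$ note that there is a monoid homomorphism $\rho_i \colon \HH \to H_i$ sending $[(a_1,i_1)\ast\cdots\ast(a_n,i_n)]$ to the product (taken in order) of those letters $a_j$ for which $i_j = i$; well-definedness on $\cong$-classes is a routine check, or one may invoke the universal property of the free product in $\Mon$ (copairing $\mathrm{id}_{H_i}$ with the trivial homomorphisms $H_j \to H_i$, $j \ne i$). The engine of the proof is:

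\medskip
\noindent\textit{Step 1.} For every $i \in I$ and every $a \in \mathscr A(H_i)$, the class $[(a,i)]$ is an atom of $\HH$.
\medskip

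Granting Step 1, the ``if'' direction is immediate: a word $\mathfrak u \ast (a,i) \ast \mathfrak v$ with $a \in \mathscr A(H_i)$ and $\mathfrak u, \mathfrak v$ representing units of $\HH$ has class $u\,[(a,i)]\,v$ with $u,v \in \HH^\times$, and for units $u,v$ the element $u\alpha v$ is an atom whenever $\alpha$ is (if $u\alpha v = \beta\gamma$, then $\alpha = (u^{-1}\beta)(\gamma v^{-1})$, and $u^{-1}\beta \in \HH^\times$ forces $\beta \in \HH^\times$, similarly $\gamma v^{-1}$; and $u\alpha v$ is a non-unit because $\alpha$ is). Step 1 also yields atomicity: given a non-unit $g \in \HH$, pick any representative of $g$ and, legitimately since each $H_k$ is atomic, factor each of its non-unit letters $(b,k)$ into a product $(c_1,k)\ast\cdots\ast(c_m,k)$ with the $c_\ell$ atoms of $H_k$; by Step 1 every $[(c_\ell,k)]$ is an atom of $\HH$, so $g$ becomes a product of atoms and units of $\HH$ with at least one atom among the factors (because $g \notin \HH^\times$, by Lemma~\ref{lem: units coproduct}), and a standard absorption argument -- repeatedly merging each unit factor into a neighbouring factor, which by the ``if'' direction stays an atom or a unit -- rewrites $g$ as a product of atoms.

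To prove Step 1, suppose $[(a,i)] = xy$ with $x,y \in \HH$ and fix representatives $\mathfrak x, \mathfrak y$ of $x,y$. Applying $\rho_i$ gives $a = \rho_i(x)\,\rho_i(y)$ in $H_i$, so, $a$ being an atom, $\rho_i(x) \in H_i^\times$ or $\rho_i(y) \in H_i^\times$; say $\rho_i(x) \in H_i^\times$. Since $a$ is a non-unit, $a \ne 1_{H_i}$, so $(a,i)$ is its own reduced form, a single-letter word; hence, by the last assertion of Lemma~\ref{lem: index-block_decomposition uniqueness} applied to $\mathfrak x \ast \mathfrak y \cong (a,i)$, every letter of $\mathfrak x \ast \mathfrak y$ whose index is $\ne i$ represents a unit. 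In particular every letter of $\mathfrak x$ of index $\ne i$ is a unit, while the product of the index-$i$ letters of $\mathfrak x$ is precisely $\rho_i(x) \in H_i^\times$; as $H_i$ is atomic, hence Dedekind-finite, this forces every index-$i$ letter of $\mathfrak x$ to represent a unit as well. So all letters of $\mathfrak x$ represent units, and $x \in \HH^\times$ by Lemma~\ref{lem: units coproduct}. This establishes Step 1.

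It remains to prove the ``only if'' direction. Let $g \in \HH$ be an atom and write its reduced form as $(b_1,j_1)\ast\cdots\ast(b_p,j_p)$. Since $g$ is a non-unit, Lemma~\ref{lem: units coproduct} forces some letter $(b_k,j_k)$ to represent a non-unit; grouping the remaining (unit) letters into maximal blocks flanking the non-unit ones, we get $g = w_0 t_1 w_1 \cdots t_r w_r$ with each $w_\ell \in \HH^\times$, each $t_\ell$ the class of a non-unit letter, and $r \ge 1$. If $r \ge 2$, then $g = (w_0 t_1)(w_1 t_2 w_2 \cdots t_r w_r)$ is a product of two non-units of $\HH$ (a product of elements of the Dedekind-finite monoid $\HH$ with a non-unit factor is a non-unit, and $w_0 t_1$ is a non-unit because $t_1$ is), contradicting that $g$ is an atom; hence $r = 1$ and $g = w_0 t_1 w_1$ with $t_1 = [(b_{k_1},j_{k_1})]$, $b_{k_1} \notin H_{j_{k_1}}^\times$. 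If $b_{k_1}$ were not an atom of $H_{j_{k_1}}$, writing $b_{k_1} = cd$ with $c,d$ non-units would give $g = \bigl(w_0\,[(c,j_{k_1})]\bigr)\bigl([(d,j_{k_1})]\,w_1\bigr)$, again a product of two non-units -- impossible. So $b_{k_1} \in \mathscr A(H_{j_{k_1}})$, and taking $\mathfrak u, \mathfrak v$ to be the (all-unit) prefix and suffix blocks together with $(a,i) := (b_{k_1},j_{k_1})$ exhibits $g$ in the required form. I expect the main obstacle to be Step 1: one must rule out every nontrivial splitting of a single-letter word, and the efficient route combines the retraction $\rho_i$ (to pin down the ``$H_i$-content'' of the two factors), the last assertion of Lemma~\ref{lem: index-block_decomposition uniqueness} (to force all letters of index $\ne i$ to be units), and Dedekind-finiteness of $H_i$ (to pass from ``the product of some letters is a unit'' to ``each of those letters is a unit''); a more pedestrian route through the seam-cancellation combinatorics of the free product also works but is considerably longer.
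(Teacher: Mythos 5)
Your proof is correct, and its overall skeleton matches the paper's: both directions plus the atomicity claim. The ``only if'' direction and the atomicity argument are essentially the paper's: the paper strips unit letters off the two ends of a representative (via Dedekind-finiteness and Remark~\ref{rem: AtoMon basics}\ref{rem: AtoMon basics i}) and observes that a representative whose extremal letters are non-units must be a single letter, whose entry is then forced to be an atom of its $H_i$; your grouping of the reduced form into unit blocks and non-unit letters, with the remark that $r\ge 2$ would split the atom into two non-units of the Dedekind-finite monoid $\HH$, is the same mechanism in slightly different clothing, and your ``absorption'' step for atomicity is the paper's ``the claim follows trivially'' made explicit. Where you genuinely diverge is the key claim that $[(a,i)]$ is an atom when $a\in\mathscr A(H_i)$ (your Step~1). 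The paper proves this purely combinatorially: it takes reduced representatives $\mathfrak b,\mathfrak c$ of two hypothetical non-unit factors, locates a non-unit letter in each, and uses uniqueness of the reduced form to force both letters to have index $i$ and the product $\mathfrak b\ast\mathfrak c$ to collapse to a word $(x,i)\ast(y,i)\cong(xy,i)$ with $x,y$ non-units of $H_i$, contradicting atomicity of $a$. You instead exploit the retraction $\rho_i\colon \HH\to H_i$ supplied by the universal property of the coproduct in $\Mon$ to pin down the $H_i$-content of the two factors, then combine the last assertion of Lemma~\ref{lem: index-block_decomposition uniqueness} (letters of index $\ne i$ are units) with Dedekind-finiteness of $H_i$ (index-$i$ letters whose ordered product is a unit are all units) and conclude via Lemma~\ref{lem: units coproduct} that one factor is a unit. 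This buys a cleaner argument that sidesteps the delicate bookkeeping about how $\mathfrak b\ast\mathfrak c$ reduces, at the modest cost of invoking the universal property of the free product; both routes ultimately rest on the same two pillars, namely uniqueness of reduced forms and Dedekind-finiteness.
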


\begin{proof}
Let $\mathfrak a=(a_1, i_1)\ast(a_2, i_2)\ast \cdots\ast(a_n,i_n)$ be a representative of an atom of $\HH$. Since $\HH$ is Dedekind-finite, by Remark~\ref{rem: AtoMon basics}\ref{rem: AtoMon basics i} we can assume without loss of generality that $a_1\in H_{i_1}\setminus H_{i_1}^\times$ and $a_n\in H_{i_n}\setminus H_{i_n}^\times$, which immediately implies that $n=1$, otherwise $[(a_1, i_1)][(a_2, i_2)\ast \cdots\ast(a_n,i_n)]$ would be a factorization of $[\mathfrak a]$ into two non-units. This implies at once that $\mathfrak a=(a_1, i_1)$ with $a_1 \in \mathscr{A}(H_{i_1})$.

On the other hand, let $a$ be an atom of $H_i$. We claim that $(a,i)$ represents an atom of $\HH$. For, assume by contradiction that $\mathfrak{b}=(b_1, i_1)\ast \cdots \ast (b_n,i_n)$ and $\mathfrak{c}=(c_1, j_1)\ast \cdots \ast (c_m,j_m)$ are the reduced representations of two elements of $\HH\setminus \HH^\times$ such that $(a,i)\cong \mathfrak{b}\ast \mathfrak{c}$ (i.e., $[(a,i)]=[\mathfrak{b}][\mathfrak{c}]$). Notice that, since $[\mathfrak{b}], [\mathfrak{c}] \notin \HH^\times$, there exist $k \in \llb 1, n \rrb$ and $\ell \in \llb 1,m \rrb$ such that $b_k \notin H_{i_k}^\times$ and $c_\ell \notin H_{j_\ell}^\times$. By the uniqueness of the reduced form, we must have $i_k = j_\ell= i$ and the word $\mathfrak{b}\ast \mathfrak{c}$ reduces to a word of the form $(x, i)\ast (y,i)\cong (xy,i)$, where $x$ and $y$ are non-units of $H_i$. This implies $a=xy$ in $H_i$, which is a contradiction.

It remains to prove that $\HH$ is an atomic monoid, i.e., that every non-unit of $\HH$ is a product of elements of $\mathscr{A}(\HH):=\{[\mathfrak u][(a,i)][\mathfrak v] \colon a\in \mathscr{A}(H_i) \text{ and } [\mathfrak u],[\mathfrak v]\in \HH^\times\}$. Let $\mathfrak a=\gword$ be a representative of a non-unit of $\HH$. By Lemma~\ref{lem: units coproduct}, there is at least one $j\in \llb 1, n\rrb$ such that $a_j$ is a non-unit of $H_{i_j}$. For each of such indexes, being $H_{i_j}$ atomic, $[(a_j, i_j)]=[(x_1, i_j)]\cdots [(x_\ell, i_j)]$ for some $x_1, \dots, x_\ell \in \mathscr{A}(H_{i_j})$. The claim follows then trivially. 
\end{proof}

We are finally ready to prove the main theorem of this section.

\begin{theorem}\label{thm:non-empty-coproducts}
 Let $\mathcal{H} = \{H_i\}_{i \in I}$ be a (possibly infinite) family of monoids in $\AtoMon$. The coproduct $(\coprod_{i \in I} H_i, \{e_i : i \in I\})$ of $\mathcal{H}$ in the category $\Mon$ is also the coproduct of $\mathcal{H}$ in $\AtoMon$.
\end{theorem}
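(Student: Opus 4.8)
The strategy is to leverage the lemmas already proved and the universal property of $\HH$ in $\Mon$, so that almost nothing new needs to be checked. Write $e_i \colon H_i \to \HH$ for the canonical monoid homomorphisms, $a \mapsto [(a,i)]$. First I would observe that $(\HH, \{e_i\}_{i \in I})$ is at least a legitimate candidate for the coproduct in $\AtoMon$: by Lemma~\ref{lem: atom coproduct}, $\HH$ is an atomic monoid, hence an object of $\AtoMon$; and, taking the words $\mathfrak u$, $\mathfrak v$ in that lemma to be empty, each $e_i$ sends an atom of $H_i$ to an atom of $\HH$, so every $e_i$ is a morphism of $\AtoMon$.

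Next, I would verify the universal property. Fix $K \in \AtoMon$ and a family of $\AtoMon$-morphisms $f_i \colon H_i \to K$. Since $(\HH, \{e_i\}_{i \in I})$ is the coproduct of $\mathcal H$ in $\Mon$, there is a unique monoid homomorphism $f \colon \HH \to K$ with $f \circ e_i = f_i$ for every $i \in I$; it only remains to check that $f$ is atom-preserving. By Lemma~\ref{lem: atom coproduct}, an arbitrary atom of $\HH$ has the form $[\mathfrak u]\,[(a,i)]\,[\mathfrak v]$ with $a \in \mathscr A(H_i)$ and $[\mathfrak u], [\mathfrak v] \in \HH^\times$, so its image under $f$ equals $f([\mathfrak u])\, f_i(a)\, f([\mathfrak v])$. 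Now $f$, being a monoid homomorphism, carries units to units, whence $f([\mathfrak u]), f([\mathfrak v]) \in K^\times$; and $f_i(a) \in \mathscr A(K)$ because $f_i$ is atom-preserving. Since $K$ is atomic (hence Dedekind-finite), Remark~\ref{rem: AtoMon basics}\ref{rem: AtoMon basics i} gives $f([\mathfrak u])\, f_i(a)\, f([\mathfrak v]) \in \mathscr A(K)$, so $f$ is a morphism of $\AtoMon$.

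Finally, uniqueness in $\AtoMon$ comes for free: any $\AtoMon$-morphism $g \colon \HH \to K$ with $g \circ e_i = f_i$ for all $i$ is in particular a monoid homomorphism with the same property, hence coincides with $f$ by the universal property of the coproduct in $\Mon$. This shows that $(\HH, \{e_i\}_{i \in I})$ satisfies the universal property of the coproduct of $\mathcal H$ in $\AtoMon$, completing the proof.

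\textbf{Main obstacle.} There is essentially no serious obstacle left at this stage: all the real work has been front-loaded into Lemmas~\ref{lem: units coproduct} and~\ref{lem: atom coproduct} (determining $\HH^\times$ and $\mathscr A(\HH)$ and establishing that $\HH$ is atomic). The one point that needs a moment's attention is showing that the homomorphism $f$ induced by the $\Mon$-universal property automatically stays atom-preserving; this is precisely where the explicit description of $\mathscr A(\HH)$ from Lemma~\ref{lem: atom coproduct} is used, together with the elementary facts that monoid homomorphisms preserve units and that, in an atomic monoid, multiplying an atom by units on either side yields an atom.
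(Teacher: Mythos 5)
Your proposal is correct and takes essentially the same route as the paper: it relies on Lemma~\ref{lem: atom coproduct} to see that $\HH$ is atomic and that the coprojections are atom-preserving, and then checks that the $\Mon$-induced morphism preserves atoms via the description of $\mathscr A(\HH)$ together with Remark~\ref{rem: AtoMon basics}\ref{rem: AtoMon basics i}, with uniqueness inherited from $\Mon$. The only (trivial) point the paper adds is that the empty family is covered by Proposition~\ref{prop: initial and terminal}, since the initial objects of $\AtoMon$ and $\Mon$ coincide.
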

\begin{proof}
 We proved in Proposition \ref{prop: initial and terminal} that the initial object in $\AtoMon$ is the same as the one in $\Mon$, so we can assume $I \neq \emptyset$. Set $\HH:=\coprod_{i \in I} H_i$. First observe that Lemma~\ref{lem: atom coproduct} ensures that $\HH\in \AtoMon$, i.e., $\HH$ is an atomic monoid. Also, it is clear that the coproduct coprojections $e_i\colon H_i \to \HH$, which send an element $a \in H_i$ to the element $[(a, i)] \in \HH$, are morphisms in $\AtoMon$. Now, given a family $\{\varphi_i\colon H_i \to K\}_{i \in I}$ of morphisms in $\AtoMon$, the unique morphism $\sigma\colon \HH \to K $ induced by the coproduct in $\Mon$ is defined by the assignment $[\gword] \mapsto \varphi_{i_1}(a_1)\cdots \varphi_{i_n}(a_n)$. So, taking into account the characterization of atoms of $\HH$, we get that $\sigma$ sends atoms of $\HH$ into atoms of $K$ because all the $\varphi_i$'s preserve atoms and units. Thus, $\sigma$ is a morphism in $\AtoMon$.
\end{proof}

The following example shows that the functor $\mathscr{A} \colon \AtoMon \to \mathsf{Set}$ considered in Proposition \ref{prop: adjunction} does not preserve (finite) coproducts, hence it can not have a right adjoint.

\begin{example}\label{example: coproducts not preserved} 
    Let $H_1$ be an infinite group and $H_2$ any finite atomic monoid that is not a group (hence with at least one atom). Clearly $\mathscr{A}(H_1)= \emptyset$ and $\mathscr{A}(H_2)=\mathscr{A} (H_1)\sqcup \mathscr{A}(H_2)$ is finite. On the other hand, the atoms of $H_1 \coprod H_2$ are all elements of the form $[(g_1, 1)][(a, 2)](g_2, 1)]$ with $g_1,g_2 \in H_1$ and $a \in \mathscr{A}(H_2)$ by Lemma \ref{lem: atom coproduct}. Thus, $\mathscr{A}(H_1 \coprod H_2)$ is an infinite set and therefore the functor $\mathscr{A}\colon \AtoMon \to \mathsf{Set}$ does not preserve (binary) coproducts.
\end{example}

The coproduct of a family $\mathcal{H}=\{H_i\}_{i \in I}$ of atomic monoids inherits their cancellativity properties.
\begin{proposition}\label{prop: preserved properties coproduct}
    Let {\bf P} be one of the following properties:
\begin{enumerate}[label=\textup{(\arabic{*})}, mode=unboxed]
    \item\label{prop: preserved properties coproduct 1} being acyclic;
    \item\label{prop: preserved properties coproduct 2} being unit-cancellative;
    \item\label{prop: preserved properties coproduct 3} being cancellative.
\end{enumerate}
If $\mathcal{H}=\{H_i\}_{i \in I}$ is a family of monoids in $\AtoMon$ satisfying property {\bf P}, then $\HH$ satisfies {\bf P}. 
\end{proposition}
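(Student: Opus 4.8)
The plan is to carry out all three parts inside the free-product model $\HH$, using the representation of its elements by reduced words recalled above. I will first note that acyclicity and cancellativity both imply unit-cancellativity (for acyclicity, put $z=1_H$ or $y=1_H$ in the definition; cancellative $\Rightarrow$ unit-cancellative is recorded in Section~\ref{sec:2.2}), so in each of the three cases every $H_i$ is unit-cancellative, and that every $H_i$, being atomic, is Dedekind-finite. The single structural input I will use is this consequence of the description of reduced forms: when two reduced words $\mathfrak a$ and $\mathfrak b$ are concatenated and reduced, every syllable erased by a full cancellation is a unit (an equation $xy=1$ in some $H_i$ forces $x,y\in H_i^\times$ by Dedekind-finiteness), while the only non-unit syllable of the reduced form of $[\mathfrak a \ast \mathfrak b]$ that is not already a non-unit syllable of $\mathfrak a$ or of $\mathfrak b$ arises when $\mathfrak a$ ends with a non-unit syllable $(a,i)$ and $\mathfrak b$ begins with a non-unit syllable $(b,i)$ of the same index, in which case these merge into the non-unit syllable $(ab,i)$. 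Writing $\nu([w])$ for the number of non-unit syllables of the reduced form of $[w]$, this gives $\nu([\mathfrak a][\mathfrak b]) \ge \nu([\mathfrak a]) + \nu([\mathfrak b]) - 1$ with equality exactly in that merging configuration; in particular $\nu$ is unchanged by multiplication by a unit of $\HH$, and $[w] \in \HH^\times$ iff $\nu([w]) = 0$ by Lemma~\ref{lem: units coproduct}.

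For the unit-cancellative case, suppose $[u][v] = [u]$; I must show $\nu([v]) = 0$. The inequality above forces $\nu([v]) \le 1$, and if $\nu([v]) = 1$ then equality holds, so the reduced forms of $[u]$ and $[v]$ are in the merging configuration: a non-unit syllable $(a,j)$ of $[u]$ (the last one not cancelled against $[v]$) merges with the unique non-unit syllable $(b,j)$ of $[v]$, and length considerations force the reduced form of $[u][v]$ to agree with that of $[u]$ except that where $[u]$ carries $(a,j)$ it carries $(ab,j)$. Hence $ab = a$ in $H_j$ with $b \notin H_j^\times$, contradicting unit-cancellativity of $H_j$; so $\nu([v]) = 0$. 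The identity $[v][u] = [u]$ is treated in the same way, the merge now occurring at the right-hand end of the reduced form of $[v]$.

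For the cancellative case I will prove that $\HH$ is right-cancellative (left-cancellativity is symmetric) by induction on the syllable length of the element being cancelled; the inductive step reduces immediately to the cancellation of a single syllable, i.e.\ to showing that $[x][(a,i)] = [y][(a,i)]$ implies $[x] = [y]$. If $a \in H_i^\times$ this is clear since $[(a,i)] \in \HH^\times$. If $a$ is a non-unit then, since $a$ times anything is a non-unit (hence $\ne 1_{H_i}$), no full cancellation occurs and the reduced form of $[x][(a,i)]$ is obtained from that of $[x]$ by appending $(a,i)$ (when the reduced form of $[x]$ is empty or its last index is $\ne i$) or by replacing its last syllable $(x_n,i)$ with $(x_na,i)$ (when the last index is $i$), and similarly for $[y]$; comparing the two resulting reduced forms and cancelling $a$ on the right inside $H_i$ — here using the full cancellativity of $H_i$ — shows the reduced forms of $[x]$ and $[y]$ coincide. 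The two ``mixed'' cases, where the last index of exactly one of $[x],[y]$ is $i$, are impossible, since they would yield a reduced word containing a trivial syllable.

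For the acyclic case, $\HH$ is already unit-cancellative by the unit-cancellative case (applicable since the $H_i$ are acyclic, hence unit-cancellative); it remains to exclude $[y][x][z] = [x]$ with $[y] \notin \HH^\times$ or $[z] \notin \HH^\times$, and one may assume $[x] \notin \HH^\times$, for otherwise Dedekind-finiteness of $\HH$ already forces $[y],[z]\in\HH^\times$. Two applications of the $\nu$-inequality give $\nu([y]) + \nu([z]) \le 2$, and, since multiplication by a unit preserves $\nu$, this sharpens to $\nu([y]), \nu([z]) \le 1$; thus each of $[y],[z]$ is a unit or consists of a single non-unit syllable. If one of them, say $[z]$, is a unit, I rewrite $[y][x] = [x][z]^{-1}$: the $\nu$-count forces the non-unit syllable of $[y]$ to merge with the first non-unit syllable of $[x]$, while $[z]^{-1}$, being a unit, leaves the head of the reduced form of $[x]$ intact and can only right-multiply one of its non-unit syllables by a unit; matching the two reduced forms then yields $c\,a = a$ or $c\,a\,w = a$ in some $H_j$ with $c \notin H_j^\times$ and $w \in H_j^\times$, contradicting unit-cancellativity, respectively acyclicity, of $H_j$. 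If both $[y]$ and $[z]$ are non-units, the same analysis applied at the two seams forces $[y]$ and $[z]$ to be single non-unit syllables and $[x]$ to expose a non-unit syllable at each end, leading to $c\,a = a$ (when $[x]$ has at least two non-unit syllables) or $c\,a\,d = a$ with $c,d \notin H_j^\times$ (when $[x]$ has exactly one), again contradicting unit-cancellativity or acyclicity of $H_j$. The step I expect to be genuinely delicate is this last one: one must run the reduced-word analysis at two seams at once, track precisely which syllable of the reduced form of $[y][x][z]$ carries the ``defect'' (possible merging and one- or two-sided multiplication by units), and pin it against the corresponding syllable of the reduced form of $[x]$, covering the short cases where both seams meet the same non-unit syllable of $[x]$; everything else is routine bookkeeping with reduced forms and the function $\nu$.
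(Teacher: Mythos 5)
Your proposal is correct in substance and, like the paper, ultimately reduces everything to normal-form (reduced-word) analysis in the free product, but it is organized quite differently. The paper's proof of the acyclic case first normalizes the reduced representative of $[\mathfrak a]$ so that its first and last letters are non-units (absorbing units into $\mathfrak u$ and $\mathfrak v$ by a conjugation trick), observes that then at most one seam reduction is possible on each side, and uses the uniqueness of index-block decompositions (Lemma~\ref{lem: index-block_decomposition uniqueness}) to force $\mathfrak u$ and $\mathfrak v$ to be empty or single letters of matching index, landing on $a_1=ua_1v$, or $a_1=ua_1$ and $a_n=a_nv$, in a single $H_i$; unit-cancellativity is then the degenerate case $\mathfrak u=\varepsilon$ or $\mathfrak v=\varepsilon$, and cancellativity is proved by comparing the full reduced forms of $\mathfrak a\ast\mathfrak b$ and $\mathfrak a\ast\mathfrak c$. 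You reverse the logical order: you introduce the count $\nu$ of non-unit syllables, prove its near-additivity, obtain unit-cancellativity first, deduce cancellativity by induction after reducing to cancellation of a single syllable (a genuinely cleaner step than the paper's), and then get acyclicity from unit-cancellativity plus a two-seam count. What the paper's conjugation trick buys is that no unit syllables ever have to be tracked inside the words; what your $\nu$ buys is a reusable invariant that handles arbitrary placements of unit syllables and makes the induction for cancellativity transparent.

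Two of your statements should be repaired, though neither is load-bearing. First, the equality case of the $\nu$-inequality is not that $\widehat{\mathfrak a}$ literally ends, and $\widehat{\mathfrak b}$ literally begins, with non-unit syllables of the same index: intervening unit syllables may cancel, so the correct statement (and the one your later steps actually use, as your parenthetical ``the last one not cancelled against $[v]$'' suggests) is that the last non-unit syllable of $\widehat{\mathfrak a}$ merges with the first non-unit syllable of $\widehat{\mathfrak b}$ after all unit syllables between them have been erased. Second, in the acyclic case, $\nu([y])\le 1$ does not mean that $[y]$ ``is a unit or consists of a single non-unit syllable'' (its reduced form may carry extra unit syllables), and the two-seam analysis does not force $[y]$ and $[z]$ to be single syllables; fortunately this claim is unnecessary: once both seams are forced to merge two non-unit syllables, comparing the first non-unit syllable of the reduced form of $[y][x][z]$ with that of $[x]$ (and, when $[x]$ has a unique non-unit syllable, noting that both seams hit that same syllable) yields $ca=a$, $caw=a$ or $cad=a$ exactly as you state, for arbitrary $[y],[z]$ with at most one non-unit syllable each, and the contradiction with unit-cancellativity or acyclicity of $H_j$ (using Dedekind-finiteness and Lemma~\ref{lem: units coproduct} throughout) goes through. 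With these two points rephrased, the bookkeeping you flag as delicate is indeed routine.
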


\begin{proof}
    
    \ref{prop: preserved properties coproduct 1}  Let $\mathcal{H}=\{H_i\}_{i \in I}$ be a family of acyclic monoids in $\AtoMon$, and let $\mathfrak a, \mathfrak u, \mathfrak v\in \FC$ be the reduced representatives of some elements $[\mathfrak a],[\mathfrak u],[\mathfrak v]\in \HH$ such that $[\mathfrak a]=[\mathfrak u][\mathfrak a][\mathfrak v]$, i.e., $\mathfrak a \cong \mathfrak u\ast \mathfrak a \ast \mathfrak v$. If $\mathfrak a$ is empty, then $[\mathfrak{u}]$ and $[\mathfrak{v}]$ are clearly units of $\HH$ by Dedekind-finiteness (see Lemma~\ref{lem: units coproduct}).
    
    Now, assume that $\mathfrak a=\gword$ is non-empty. We first observe that, without loss of generality, we can assume $a_1\in H_{i_1}\setminus H_{i_1}^\times$. Indeed, if this is not the case, we can replace $\mathfrak{a}$ with $\mathfrak{a}':=(a_2, i_2)\ast \cdots \ast (a_n, i_n)$ and $\mathfrak{u}$ with (the reduced form of) $\mathfrak{u}':=(a_1^{-1}, i_1)\ast \mathfrak{u}\ast (a_1,i_1)$. We clearly have $[\mathfrak{a}']=[\widehat{\mathfrak{u}}'][\mathfrak{a}'][\mathfrak{v}]$ and $\mathfrak{u}$ is a unit of $\HH$ if and only if so is $\widehat{\mathfrak{u}}'$. Symmetrically, we can also assume that $a_n\in H_{i_n}\setminus H_{i_n}^\times$.
    If $\mathfrak{u}=(u_1, j_1)\ast \cdots \ast (u_m,j_m)$ is a non-empty word, we can reduce the word $\mathfrak{u}\ast\mathfrak{a}$ only if $i_1=j_m$, replacing $(u_m,j_m)\ast (a_1,i_1)$ with $(u_m a_1,i_1)$, and no further reductions are possible since $a_1\in H_{i_1}\setminus H_{i_1}^\times$ and both $\mathfrak a$ and $\mathfrak u$ are reduced. Arguing in the same way for $\mathfrak{a}\ast\mathfrak{v}$, we then get from Lemma~\ref{lem: index-block_decomposition uniqueness} that the congruence $\mathfrak a \cong \mathfrak u\ast \mathfrak a \ast \mathfrak v$ can only take place if either $\mathfrak u=\varepsilon$ or $\mathfrak u=(u, i_1)$ and either $\mathfrak v=\varepsilon$ or $\mathfrak v=(v, i_n)$. Therefore $a_1=ua_1v$ in $H_{i_1}$ if $n=1$, or $a_1=u a_1$ in $H_{i_1}$ and $a_n=a_n v$ in $H_{i_n}$ if $n \geq 2$ (here $u$ and $v$ can possibly be the identities of their monoids). The acyclicity of $H_{i_1}$ and $H_{i_n}$ implies that $u$ and $v$ are units in the monoids they belong to, and so $[\mathfrak{u}]=[(u, i_1)]$ and $[\mathfrak{v}]=[(v, i_n)]$ are units of $\HH$.

    \ref{prop: preserved properties coproduct 2} We can argue as in the previous point, fixing either  $\mathfrak{u}=\varepsilon$ or $\mathfrak{v}=\varepsilon$.
    
    \ref{prop: preserved properties coproduct 3} Let $\mathfrak a, \mathfrak b, \mathfrak c\in \FC$ be the reduced representatives of some elements $[\mathfrak a],[\mathfrak b], [\mathfrak c]\in \HH$ such that $[\mathfrak a][\mathfrak b]=[\mathfrak a][\mathfrak c]$. If $\mathfrak a=\varepsilon$, then $[\mathfrak b]=[\mathfrak c]$ trivially. Hence, we can assume that $\mathfrak a$ is non-empty. Moreover, if one between $\mathfrak b$ or $\mathfrak c$ is empty, we get by \ref{prop: preserved properties coproduct 2} that they both are. We can then assume that $\mathfrak a=\gword$, $\mathfrak b=(b_1,j_1)\ast\cdots\ast(b_r,j_r)$ and $\mathfrak c=(c_1,k_1)\ast \cdots\ast (c_s,k_s)$ are non-empty reduced words in $\FC$ such that $\mathfrak a\ast \mathfrak b\cong \mathfrak a \ast \mathfrak c$. By an argument similar to the one used in the proof of \ref{prop: preserved properties coproduct 1}, we can also assume that $a_n \in H_{i_n}\setminus H_{i_n}^\times$. Thus, we can reduce the word $\mathfrak{a}\ast\mathfrak{b}$ [resp., $\mathfrak{a}\ast\mathfrak{c}$] only if $i_n=j_1$ [resp., $i_n=k_1$] by replacing $(a_n,i_n)\ast (b_1,j_1)$ with $(a_n b_1,i_n)$ [resp., $(a_n,i_n)\ast (c_1,k_1)$ with $(a_n c_1,i_n)$], and no further reductions are possible since $a_n$ is a non-unit of $H_{i_n}$ and $\mathfrak{a,b,c}$ are reduced words. In particular, since the reduced forms of $\mathfrak a \ast \mathfrak b$ and $\mathfrak a \ast \mathfrak c$ are equal, we get from Lemma \ref{lem: index-block_decomposition uniqueness} that the equality $(a_n,i_n) \ast (b_1,j_1) \ast \dots \ast (b_r,j_r)=(a_n,i_n)\ast (c_1,k_1)\ast \dots \ast (c_s,k_s)$ holds in $\FC$. Notice that neither $j_1=i_n \neq k_1$ nor $j_1\neq i_n = k_1$ can happen. Indeed, if $j_1=i_n \neq k_1$, again by Lemma \ref{lem: index-block_decomposition uniqueness}, we get $a_n b_1=a_n$ and the cancellativity of $H_{i_n}$ implies $b_1=1_{H_{j_1}}$, which is a contradiction since $\mathfrak b$ is a reduced word. Similarly for the case $j_1\neq i_n = k_1$.
    Thus, we can either have $j_1= i_n = k_1$ or $j_1\neq i_n \neq k_1$. In both cases, the uniqueness of the reduced form and the cancellativity of $H_{i_n}$ immediately imply that $\mathfrak b = \mathfrak c$. This concludes the proof since the argument for the case $[\mathfrak b][\mathfrak a]=[\mathfrak c][\mathfrak a]$ is analogous.
\end{proof}

We conclude this section with a series of arithmetic results that illustrate how the length sets of the coproduct of a family of monoids in $\AtoMon$ and the related arithmetic invariants are connected to those of the monoids in the family. The reader may find useful to revise Section~\ref{subsec:factorizations} before proceeding.
 
\begin{proposition}\label{prop: s.o.l. coproduct}
  Let $\HH$ be the coproduct of a non-empty family $\mathcal{H} = \{H_i\}_{i \in I}$ of monoids in $\AtoMon$. Let $\mathfrak a=\gword\in \FC$ be the reduced representative of a non-unit $[\mathfrak a]\in \HH$, and let $J$ be the set of all $j \in \llb 1, n\rrb$ such that $a_j\not\in H_{i_j}^\times$. Then, 
    \[\mathsf{L}_{\HH}([\mathfrak a])=\sum_{j\in J} \mathsf{L}_{H_{i_j}}(a_j).\]  
\end{proposition}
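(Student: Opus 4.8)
The plan is to establish both inclusions in the asserted identity, after equipping the words of $\FC$ with a numerical gadget. For $\mathfrak b=(b_1,\ell_1)\ast\cdots\ast(b_k,\ell_k)\in\FC$ I put
$$
\omega(\mathfrak b):=\sum_{t}\,\mathsf L_{H_{\ell_t}}(b_t)\subseteq\mathbb N,
$$
the sum running over the $t\in\llb 1,k\rrb$ with $b_t\notin H_{\ell_t}^\times$ (an empty sum being $\{0\}$); each summand is a non-empty subset of $\mathbb N^+$, as $H_{\ell_t}$ is atomic and $b_t$ a non-unit. By the very definition of $J$, $\omega(\mathfrak a)=\sum_{j\in J}\mathsf L_{H_{i_j}}(a_j)$ is the right-hand side of the claim, so it is enough to prove $\mathsf L_{\HH}([\mathfrak a])=\omega(\mathfrak a)$. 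Two routine facts about $\omega$ will be used repeatedly: it is additive under concatenation, $\omega(\mathfrak b\ast\mathfrak b')=\omega(\mathfrak b)+\omega(\mathfrak b')$; and if $\mathfrak b$ represents a unit of $\HH$ then all its letters are units by Lemma~\ref{lem: units coproduct}, so $\omega(\mathfrak b)=\{0\}$.

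For $\omega(\mathfrak a)\subseteq\mathsf L_{\HH}([\mathfrak a])$, which is the easy direction, I would take $k_j\in\mathsf L_{H_{i_j}}(a_j)$ for each $j\in J$, pick atomic factorizations $a_j=c_{j,1}\cdots c_{j,k_j}$ in $H_{i_j}$, and use the defining relations of $\cong$ to write $[(a_j,i_j)]=\prod_{t=1}^{k_j}[(c_{j,t},i_j)]$, a product of $k_j$ atoms of $\HH$ (Lemma~\ref{lem: atom coproduct}); for $j\notin J$, $[(a_j,i_j)]\in\HH^\times$ (Lemma~\ref{lem: units coproduct}). Then $[\mathfrak a]=\prod_{j=1}^n[(a_j,i_j)]$ is a product in $\HH$ of $\sum_{j\in J}k_j$ atoms together with finitely many units; since $[\mathfrak a]$ is a non-unit we have $J\neq\emptyset$, so at least one atom occurs, and absorbing each unit into a neighbouring atom — legitimate because $\HH$ is Dedekind-finite, so a product of an atom by units is an atom (Remark~\ref{rem: AtoMon basics}\ref{rem: AtoMon basics i}) — turns this into a length-$\sum_{j\in J}k_j$ factorization. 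As the $k_j$ were arbitrary, this yields the inclusion.

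For the reverse inclusion I would fix $L\in\mathsf L_{\HH}([\mathfrak a])$ and a factorization $[\mathfrak a]=w_1\cdots w_L$ with each $w_\ell\in\mathscr A(\HH)$, represent $w_\ell$ by a word $\mathfrak p_\ell=\mathfrak u_\ell\ast(c_\ell,k_\ell)\ast\mathfrak v_\ell$ with $c_\ell\in\mathscr A(H_{k_\ell})$ and $[\mathfrak u_\ell],[\mathfrak v_\ell]\in\HH^\times$ (Lemma~\ref{lem: atom coproduct}), and set $\mathfrak w:=\mathfrak p_1\ast\cdots\ast\mathfrak p_L$, so that $[\mathfrak w]=[\mathfrak a]$ and hence $\widehat{\mathfrak w}=\mathfrak a$. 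The non-unit letters of $\mathfrak w$ are exactly the $(c_\ell,k_\ell)$, each of which is an atom, so $\omega(\mathfrak w)=\sum_{\ell=1}^{L}\{1\}=\{L\}$. I would finish with two claims. \textbf{(A)} Calling a \emph{reduction step} the deletion of a letter $(1_{H_i},i)$ or the replacement of two consecutive letters $(x,i)\ast(y,i)$ by $(xy,i)$, repeatedly performing reduction steps on any word terminates (the letter count strictly decreases) at a reduced word, which by uniqueness of reduced representatives is $\widehat{\mathfrak w}=\mathfrak a$. \textbf{(B)} A single reduction step never decreases $\omega$: deleting an identity letter, or merging when one of $x,y$ is a unit, leaves $\omega$ unchanged (for the merge, because $\mathsf L_{H_i}(xy)=\mathsf L_{H_i}(y)$ when $x\in H_i^\times$, the length-set fact from Section~\ref{subsec:factorizations}), while merging two non-units $(x,i),(y,i)$ replaces their contribution $\mathsf L_{H_i}(x)+\mathsf L_{H_i}(y)$ by $\mathsf L_{H_i}(xy)$, and $\mathsf L_{H_i}(x)+\mathsf L_{H_i}(y)\subseteq\mathsf L_{H_i}(xy)$ since concatenating an atomic factorization of $x$ with one of $y$ is an atomic factorization of $xy$. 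Together, (A) and (B) give $\{L\}=\omega(\mathfrak w)\subseteq\omega(\mathfrak a)$, i.e. $L\in\sum_{j\in J}\mathsf L_{H_{i_j}}(a_j)$.

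I expect the main obstacle to be the asymmetry in (B): $\omega$ is not a $\cong$-invariant, because the inclusion $\mathsf L_{H_i}(x)+\mathsf L_{H_i}(y)\subseteq\mathsf L_{H_i}(xy)$ is in general strict, so the naive guess that ``lengths add up'' is false and one cannot reason along an arbitrary derivation $\mathfrak w\cong\mathfrak a$. The remedy is to exploit only the monotone (reducing) direction of the rewriting — which is why claim (A) must be stated and proved rather than used tacitly — and to set up $\omega$ so that unit letters contribute $\{0\}$ and atom letters contribute $\{1\}$, which is exactly what makes $\omega(\mathfrak w)=\{L\}$ and $\omega(\mathfrak a)$ land on the two sides of the identity. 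The remaining ingredients — additivity of $\omega$, the short case check in (B), and termination of reductions — are routine.
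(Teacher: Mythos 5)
Your argument is correct, and while the easy inclusion $\sum_{j\in J}\mathsf L_{H_{i_j}}(a_j)\subseteq\mathsf L_{\HH}([\mathfrak a])$ proceeds essentially as in the paper (factor each non-unit letter into atoms, use Lemma~\ref{lem: units coproduct} for the unit letters, and absorb units into adjacent atoms via Dedekind-finiteness of $\HH$), your treatment of the hard inclusion takes a genuinely different route. The paper compares the reduced word $\mathfrak a$ with a congruent word built from reduced representatives of the $m$ atoms and invokes the index-block decomposition machinery (Lemma~\ref{lem: index-block_decomposition uniqueness}) to match blocks and conclude that each $a_j$ is a unit times a product of atoms drawn from the given factorization, whence $m=\sum_j s_j$. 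You instead introduce the word-level invariant $\omega$ (sum of the length sets of the non-unit letters), observe that $\omega$ of a representative of an atomic factorization of length $L$ is exactly $\{L\}$, and show that $\omega$ is monotone (non-decreasing as a set) under elementary reduction steps, so that termination of the rewriting together with the uniqueness of reduced forms yields $\{L\}\subseteq\omega(\mathfrak a)$. Both arguments ultimately rest on the uniqueness of the reduced representative, but yours replaces the block-matching analysis by a local, two-case verification (merging an identity or a unit leaves $\omega$ unchanged because $\mathsf L_{H_i}(uxv)=\mathsf L_{H_i}(x)$ for units $u,v$; merging two non-units can only enlarge $\omega$ since factorizations concatenate), which is arguably more elementary and correctly isolates the asymmetry $\mathsf L_{H_i}(x)+\mathsf L_{H_i}(y)\subseteq\mathsf L_{H_i}(xy)$ that makes $\omega$ a one-way invariant rather than a $\cong$-invariant; the paper's approach, on the other hand, reuses lemmas it needs anyway elsewhere (e.g.\ in Proposition~\ref{prop: preserved properties coproduct}) and gives slightly finer information, namely how the atoms of a factorization of $[\mathfrak a]$ distribute over the letters $a_j$ up to associates. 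One small point worth making explicit in your case analysis: when two non-unit letters $(x,i)\ast(y,i)$ are merged, $xy$ is again a non-unit (by Dedekind-finiteness of the atomic monoid $H_i$), so its contribution $\mathsf L_{H_i}(xy)$ really does appear in $\omega$ of the new word.
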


\begin{proof}
We already proved in Lemma~\ref{lem: atom coproduct} that $\HH$ is atomic so $\mathsf{L}_{\HH}([\mathfrak a])$ is a non-empty subset of the positive integers. 
Let $m\in \mathsf{L}_{\HH}([\mathfrak a])$. Then, taking into account the characterization of the atoms of $\HH$ given in Lemma~\ref{lem: atom coproduct}, \[\mathfrak a=\gword \cong \mathfrak u_1 \ast (w_1, j_1) \ast \mathfrak v_1 \ast \cdots \ast \mathfrak u_m \ast (w_m, j_m) \ast \mathfrak v_m,\] where for every $k\in \llb 1, m\rrb$, $j_k\in\{i_1,\dots,i_n\}$, $\mathfrak u_k \ast (w_k, j_k) \ast \mathfrak v_k$ is a reduced word, $\mathfrak u_k, \mathfrak v_k$ are (reduced) representatives of units of $\HH$, and $w_k \in \mathscr A(H_{j_k})$. It then follows from Lemma~\ref{lem: index-block_decomposition uniqueness} (so, ultimately, from the uniqueness of the reduced form), that for every $j\in \llb1,n\rrb$, $a_j=\alpha_0^{(j)}\alpha^{(j)}_1\cdots \alpha^{(j)}_{s_j}$, where $s_j \in \mathbb N$, $\alpha_0^{(j)}\in H_{i_j}^\times$, and $\alpha^{(j)}_\ell \in \mathscr{A}(H_{i_j})$ is associated (in $H_{i_j}$) to one of the atoms $w_1,\dots, w_m$ for every $\ell \in \llb 1, s_j\rrb$ (note that $s_j=0$ for every $j\not \in J$). Moreover, it is clear that $m=\sum_{j\in J}s_j\in \sum_{j\in J}\mathsf{L}_{H_{i_j}}(a_j)$.

On the other hand, let $m\in \sum_{j\in J} \mathsf{L}_{H_{i_j}}(a_j)$. Then, for every $j\in J$, there exists an atomic factorization $\mathfrak z_j =z_1^{(j)}\ast \cdots \ast z_{n_j}^{(j)}\in \Cal Z_{H_{i_j}}(a_j)$ of length $n_j:=|\mathfrak z_j|\in \mathbb N^+$, and $m=\sum_{j\in J}n_j$.
If we identify the word $\mathfrak z_j$ with $(z_1^{(j)}, i_j)\ast \cdots \ast (z_{n_j}^{(j)}, i_j) \in \FC$, it is then clear that $\mathfrak a$ is congruent to a word
$$
\mathfrak z:= {\ast}_{j\in J} (\mathfrak{u}_j\ast \mathfrak{z}_j \ast \mathfrak{v}_j ),
$$
where for every $j \in J$, $\mathfrak u_j$ and $\mathfrak v_j$ are reduced representatives of units of $\HH$ (possibly empty). Moreover, it follows from Lemma~\ref{lem: atom coproduct} that for every $j\in J$, $\mathfrak u_j\ast (z_1^{(j)}, i_j)$ and $(z_{n_j}^{(j)}, i_j)\ast \mathfrak v_j$ are representatives of atoms of $\HH$, as well as $(z_{\ell_j}^{(j)}, i_j)$ for every $\ell_j\in \llb 1,n_j\rrb$. 
Therefore, from $\mathfrak z$ we get an atomic factorization of $[\mathfrak a]$ in $\HH$ of length $m$ and so $m\in \mathsf L_{\HH}([\mathfrak a])$, as claimed.
\end{proof}

\begin{theorem}\label{thm: s.s.o.l. coproduct}
Let $\HH$ be the coproduct of a non-empty family $\mathcal{H}=\{H_i\}_{i \in I}$ of monoids in $\AtoMon$. Denote by $\Gamma_\mathcal{H}$ the following subset of the free monoid $\mathscr F(I)$ over $I$:
\begin{enumerate}
\item If there exist $i,j \in I$ with $i \ne j$ such that $H_i^\times$ and $H_j^\times$ are both non-trivial, then $\Gamma_\mathcal{H} := \mathscr F(I)$.
\item If there is a unique $i \in I$ such that $H_i^\times$ is non-trivial, then $\Gamma_\mathcal{H}$ is the set of all $I$-words without consecutive letters both equal to $i$.
\item Otherwise, 
$\Gamma_\mathcal{H}$ is the set of all non-empty $I$-words any two consecutive letters of which are different.
\end{enumerate}

Then, the system of non-zero length sets of $\HH$ is given by
\begin{equation}
\label{thm:s.s.o.l.coproduct:equ(1)}
\mathscr{L}^+(\HH)=
\bigcup_{i_1 \ast \cdots \ast i_n \in \Gamma_\mathcal{H}} \{L_1 + \cdots + L_n: L_1 \in \mathscr{L}^+(H_{i_1}), \ldots, L_n \in \mathscr{L}^+(H_{i_n})\} 
\end{equation}
\end{theorem}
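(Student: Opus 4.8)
The plan is to reduce everything to Proposition~\ref{prop: s.o.l. coproduct}, which already computes $\mathsf{L}_{\HH}([\mathfrak a])$ for the reduced representative $\mathfrak a$ of a non-unit as a sumset $\sum_{j\in J}\mathsf{L}_{H_{i_j}}(a_j)$, where $J$ indexes the non-unit letters of $\mathfrak a$. Since $\mathscr{L}^+(\HH)$ is exactly the family of sets $\mathsf{L}_{\HH}([\mathfrak a])$ with $[\mathfrak a]$ a non-unit of $\HH$ (the only length set of $\HH$ not of this shape being $\{0\}=\mathsf{L}_{\HH}(1)$, which $\mathscr{L}^+$ discards), the task splits into two inclusions, and the whole content lies in matching, on one side, the index word obtained by deleting the unit letters of a reduced word with, on the other side, the words allowed in $\Gamma_{\mathcal H}$.

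\textbf{The inclusion $\subseteq$.} I would fix a non-unit $[\mathfrak a]\in\HH$ with reduced representative $(a_1,i_1)\ast\cdots\ast(a_N,i_N)$, let $J=\{j_1<\cdots<j_n\}$ be the set of positions with $a_j\notin H_{i_j}^\times$ (so $n\ge 1$ by Lemma~\ref{lem: units coproduct}), and note via Proposition~\ref{prop: s.o.l. coproduct} that $\mathsf{L}_{\HH}([\mathfrak a])=\sum_{t=1}^n\mathsf{L}_{H_{i_{j_t}}}(a_{j_t})$, with each $\mathsf{L}_{H_{i_{j_t}}}(a_{j_t})$ a non-zero length set of $H_{i_{j_t}}$ (as $a_{j_t}$ is a non-unit of an atomic monoid). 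It then remains to show that $w:=i_{j_1}\ast\cdots\ast i_{j_n}$ lies in $\Gamma_{\mathcal H}$, and this is where the trichotomy enters: in Case~(1) there is nothing to check; in Case~(3) no $H_i$ has a non-trivial unit group, so a reduced word has no unit letters at all, forcing $J=\llb 1,N\rrb$ and making $w$ the (non-empty, neighbour-distinct) index word of $\mathfrak a$; in Case~(2) one first observes that a unit letter $(a_j,i_j)$ of a reduced word must have $i_j=i_0$ (the unique index with non-trivial unit group), since $a_j\neq 1$ forces $H_{i_j}^\times$ non-trivial, and then a short inspection of the letters of $\mathfrak a$ strictly between $j_t$ and $j_{t+1}$ (all of which carry index $i_0$) together with the neighbour-distinctness of $\mathfrak a$ shows that $w$ cannot contain two consecutive letters equal to $i_0$.

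\textbf{The inclusion $\supseteq$.} I would fix a non-empty word $i_1\ast\cdots\ast i_n\in\Gamma_{\mathcal H}$ and sets $L_k\in\mathscr{L}^+(H_{i_k})$; writing each $L_k$ as $\mathsf{L}_{H_{i_k}}(b_k)$ for a suitable non-unit $b_k\in H_{i_k}$ (possible since $L_k\notin\{\emptyset,\{0\}\}$), the goal is to build a reduced word whose non-unit letters are precisely the $(b_k,i_k)$ and then apply Proposition~\ref{prop: s.o.l. coproduct}. If consecutive indices of the word are already distinct (automatic in Case~(3)), the word $(b_1,i_1)\ast\cdots\ast(b_n,i_n)$ does the job. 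Otherwise (Cases~(1) and~(2)), whenever $i_k=i_{k+1}$ I would splice one unit letter $(u,c)$ between positions $k$ and $k+1$, choosing an index $c$ with non-trivial unit group and $c\neq i_k$ (in Case~(2), $c=i_0\neq i_k$; in Case~(1), whichever of the two distinguished indices differs from $i_k$) and $u\in H_c^\times\setminus\{1\}$. The resulting word is reduced, because every spliced letter is flanked by two letters of index $i_k\neq c$ and no two spliced letters are adjacent; its non-unit letters are exactly the original $(b_k,i_k)$; hence Proposition~\ref{prop: s.o.l. coproduct} gives its length set as $L_1+\cdots+L_n$.

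\textbf{Main obstacle and a caveat.} I expect the main difficulty to be the bookkeeping in the $\subseteq$-direction, Case~(2) — pinning down exactly which index words survive the deletion of unit letters from a reduced word — since it is precisely this combinatorial fact that dictates the three-way definition of $\Gamma_{\mathcal H}$: the number (two, one, or zero) of factors with a non-trivial unit group governs how many independent ``separators'' are available to prevent consecutive non-unit letters from collapsing after the intervening units cancel. The $\supseteq$-direction is comparatively mechanical once one sees that the same stock of non-trivial units can be used in reverse to realize the prescribed index pattern. One bookkeeping point to flag: the empty word belongs to $\Gamma_{\mathcal H}$ in Cases~(1) and~(2) but would contribute only $\{0\}\notin\mathscr{L}^+(\HH)$, so the union in \eqref{thm:s.s.o.l.coproduct:equ(1)} is to be read as ranging over the non-empty words of $\Gamma_{\mathcal H}$.
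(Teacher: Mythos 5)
Your proposal is correct and follows essentially the same route as the paper's proof: both directions are reduced to Proposition~\ref{prop: s.o.l. coproduct}, with the inclusion $\subseteq$ obtained by deleting the unit letters of a reduced representative and checking the resulting index word lies in $\Gamma_{\mathcal H}$ (a verification the paper dismisses as ``trivial but tedious'' and you carry out), and the inclusion $\supseteq$ obtained by splicing non-trivial units from a non-reduced factor $H_k$ with $k\ne i_j$ between equal consecutive indices, exactly as in the paper. Your closing caveat about the empty word in Cases~(1) and~(2) is a fair reading of the statement and matches the paper's own proof, which only ever uses non-empty words of $\Gamma_{\mathcal H}$.
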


\begin{proof}
Denote by $\mathscr L$ the set on the right-hand side of Eq.~\eqref{thm:s.s.o.l.coproduct:equ(1)}, and let $L\in \mathscr{L}^+(\HH)$. By definition, there exists a non-unit $x\in \HH$ such that $L=\mathsf L_{\HH}(x)$. 
Let $\mathfrak a= \gword\in \FC$ be the reduced representative of $x$, and let $\mathfrak a'$ be the ordered subword  of $\mathfrak a$ obtained by removing from $\mathfrak a$ all the letters of the form $(a_j, i_j)$ with $a_j \in H_{i_j}^\times$. Thus, we can write $\mathfrak a'=(a_{j_1}, i_{j_1})\ast \cdots \ast (a_{j_m}, i_{j_m})$ for a suitable subset $J:=\{j_1, \dots, j_m\}$ of $\llb 1, n \rrb$. Notice that $J$ is non-empty since $x$ is a non-unit of $\HH$, and it is trivial but tedious to check that the word $\mathfrak i = i_{j_1}\ast \cdots \ast i_{j_m} \in \mathscr F(I)$ is actually in $\Gamma_{\mathcal H}$. It then follows from Proposition~\ref{prop: s.o.l. coproduct} that $\mathsf{L}_{\HH}(x)=\sum_{k=1}^{m} \mathsf{L}_{H_{\mathfrak i_{j_k}}}(a_{j_k})$ and hence $L\in \mathscr L$.

Conversely, let $L \in \mathscr L$. There then exist a non-empty word $\mathfrak i=i_1 \ast \cdots \ast i_n \in \Gamma_\mathcal{H}$ and, for each $j \in \llb 1, n \rrb$, a non-unit $a_j \in H_{i_j}$ such that $L = \mathsf{L}_{H_{i_1}}(a_1) + \cdots + \mathsf{L}_{H_{i_n}}(a_n)$. We need to find a reduced word $\mathfrak b \in \FC$ such that $\mathsf{L}_{\HH}([\mathfrak b])=L$. To this end, consider the word $\mathfrak a := \gword \in \FC$. If $\mathfrak a$ is already reduced, set $\mathfrak b:= \mathfrak a$. Otherwise, there is an index $j \in \llb 1, n-1\rrb$ such that $i_j=i_{j+1}$ (note that this can only happen in cases (1) or (2) of the statement). By hypothesis, since $\mathfrak i$ is in $\Gamma_{\mathcal H}$, there is $k\in I$ such that $k\ne i_j$ and $H_{k}$ is not reduced. Thus, there exists $b\in H_k^\times \setminus \{1_{H_k}\}$ and we can replace the subword $(a_j, i_j)\ast (a_{j+1}, i_{j+1})$ of $\mathfrak a$ with the reduced word $(a_j, i_j)\ast(b, k)\ast (a_{j+1}, i_{j+1})$. Repeating this procedure for all $j \in \llb 1, n-1\rrb$, we obtain the desired reduced word $\mathfrak b$. 
\end{proof}

\begin{theorem}\label{thm: unions of s.o.l. coproduct}
  Let $\HH$ be the coproduct of a non-empty family $\mathcal{H}=\{H_i\}_{i \in I}$ of monoids in $\AtoMon$, and let $\Gamma_\mathcal H$ be as in the previous theorem. Then, for any positive integer $k$,  
    \[\mathscr{U}_k(\HH)=\bigcup_{i_1 \ast \cdots \ast i_n \in \Gamma_\mathcal{H}}
    \{\mathscr{U}_{k_1}(H_{\mathfrak i_1}) + \cdots + \mathscr{U}_{k_n}(H_{\mathfrak i_n}): k_1, \ldots, k_n \in \mathbb N^+
    \text{ and } k_1 + \cdots + k_n = k\}
    \]
\end{theorem}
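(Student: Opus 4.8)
The plan is to establish the two inclusions separately, building on Proposition~\ref{prop: s.o.l. coproduct} (which computes $\mathsf{L}_{\HH}$ of a reduced word as a sumset of length sets of the relevant $H_i$'s) together with the two technical moves already carried out inside the proof of Theorem~\ref{thm: s.s.o.l. coproduct}: the extraction of an index word in $\Gamma_{\mathcal H}$ from the non-unit letters of a reduced word, and the reverse ``insertion of non-trivial units'' that turns an arbitrary index word of $\Gamma_{\mathcal H}$ into a reduced $\FC$-word with a prescribed non-unit pattern. Write $\mathscr{V}$ for the set on the right-hand side of the claimed identity.

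For the inclusion $\mathscr{U}_k(\HH) \subseteq \mathscr{V}$, I would start from $m \in \mathscr{U}_k(\HH)$, pick $x \in \HH$ with $\{k,m\} \subseteq \mathsf{L}_{\HH}(x)$ (a non-unit, since $k \ge 1$), and let $\mathfrak a = \gword$ be its reduced representative with $J := \{j \in \llb 1,n\rrb : a_j \notin H_{i_j}^\times\}$. As in the proof of Theorem~\ref{thm: s.s.o.l. coproduct}, $J \ne \emptyset$ by Lemma~\ref{lem: units coproduct}, the index word $\mathfrak i := {\ast}_{j \in J} i_j$ lies in $\Gamma_{\mathcal H}$, and Proposition~\ref{prop: s.o.l. coproduct} gives $\mathsf{L}_{\HH}(x) = \sum_{j \in J} \mathsf{L}_{H_{i_j}}(a_j)$. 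Decomposing $k = \sum_{j\in J} k_j$ and $m = \sum_{j\in J} m_j$ along this sumset with $k_j, m_j \in \mathsf{L}_{H_{i_j}}(a_j)$, one has $k_j \in \mathbb N^+$ (each $a_j$, $j\in J$, being a non-unit of an atomic monoid) and $m_j \in \mathscr{U}_{k_j}(H_{i_j})$, since $\mathsf{L}_{H_{i_j}}(a_j)$ is a length set of $H_{i_j}$ containing $k_j$. Summing over $j$ and using $\mathfrak i \in \Gamma_{\mathcal H}$ with $\sum_{j\in J} k_j = k$ places $m$ in $\mathscr{V}$.

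For the reverse inclusion $\mathscr{V} \subseteq \mathscr{U}_k(\HH)$, I would take $m \in \mathscr{V}$, realized by a word $\mathfrak i = i_1 \ast \cdots \ast i_n \in \Gamma_{\mathcal H}$, positive integers $k_1,\dots,k_n$ with $\sum_\ell k_\ell = k$, and a decomposition $m = m_1 + \cdots + m_n$ with $m_\ell \in \mathscr{U}_{k_\ell}(H_{i_\ell})$. For each $\ell$, choose a non-unit $a_\ell \in H_{i_\ell}$ whose length set $L_\ell := \mathsf{L}_{H_{i_\ell}}(a_\ell)$ contains both $k_\ell$ and $m_\ell$ (possible by definition of $\mathscr{U}_{k_\ell}$, and $a_\ell$ is a non-unit since $k_\ell \ge 1$). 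Then I would repeat verbatim the construction at the end of the proof of Theorem~\ref{thm: s.s.o.l. coproduct}: start from $(a_1,i_1) \ast \cdots \ast (a_n,i_n)$ and, whenever two consecutive indices coincide, insert a letter $(b,k')$ with $k' \ne i_j$ and $b$ a non-trivial unit of $H_{k'}$ (available precisely because $\mathfrak i \in \Gamma_{\mathcal H}$), obtaining a reduced word $\mathfrak b \in \FC$ whose non-unit letters are exactly the $(a_\ell, i_\ell)$. Proposition~\ref{prop: s.o.l. coproduct} then yields $\mathsf{L}_{\HH}([\mathfrak b]) = L_1 + \cdots + L_n$, a length set of $\HH$ containing $k = \sum_\ell k_\ell$ and having $m = \sum_\ell m_\ell$ among its elements; hence $m \in \mathscr{U}_k(\HH)$.

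The argument involves no new ideas: the only points requiring care --- and thus the main (mild) obstacle --- are the two bookkeeping claims imported from Theorem~\ref{thm: s.s.o.l. coproduct}, namely that the index word read off the non-unit letters of a reduced word really lies in $\Gamma_{\mathcal H}$, and that the units inserted to reduce the word in the converse direction are genuine non-trivial units whose index differs from that of their neighbours, so that the resulting reduced word has exactly the prescribed non-unit pattern and Proposition~\ref{prop: s.o.l. coproduct} applies without change. Both follow from the case distinction (1)--(3) defining $\Gamma_{\mathcal H}$.
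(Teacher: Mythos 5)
Your proposal is correct and follows essentially the same route as the paper: both directions come down to the description of the non-zero length sets of $\HH$ as sumsets $L_1+\cdots+L_n$ along index words in $\Gamma_{\mathcal H}$, plus the same bookkeeping splitting $k=\sum k_j$ and $m=\sum m_j$ with $k_j,m_j$ in a common length set of $H_{i_j}$. The only (cosmetic) difference is that the paper simply invokes the statement of Theorem~\ref{thm: s.s.o.l. coproduct} in both directions, whereas you re-derive the needed facts by inlining its proof (reduced words, extraction of the index word, insertion of non-trivial units) together with Proposition~\ref{prop: s.o.l. coproduct}.
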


\begin{proof}
Let $m\in \mathscr{U}_k(\HH)$. Then $m\ne 0$ and, by definition of union and  Theorem \ref{thm: s.s.o.l. coproduct} there exists a non-empty $\mathfrak i=i_1 \ast \cdots \ast i_n \in \Gamma_\mathcal H$ and $L=L_1 + \cdots + L_n$, with $L_j\in \mathscr{L}^+(H_{i_j})$ for every $j\in \llb 1, n\rrb$, such that $\{m,k\}\subseteq L$. Then, for every $j\in \llb 1, n\rrb$, there exist $m_j, k_j\in L_j$ (both positive) such that $\sum_{j=1}^n m_j=m$ and $\sum_{j=1}^n k_j=k$. Thus, $m_j\in \mathscr{U}_{k_j}(H_{\mathfrak i_j})$ for every $j$, and so $m\in \sum_{j=1}^{n}\mathscr{U}_{k_j}(H_{i_j})$, as desired.

On the other hand, assume that $m \in \sum_{j=1}^{n}\mathscr{U}_{k_j}(H_{i_j})$, where $i_1 \ast \cdots \ast i_n\in \Gamma_\mathcal H$,  $k_1, \ldots, k_n \in \mathbb N^+$, and $\sum_{j=1}^n k_j=k$. Then, for every $j\in \llb 1, n\rrb$, there exists $L_j\in \mathscr{L}^+(H_{i_j})$ containing $k_j$ such that $m \in \sum_{j=1}^n L_j$. Moreover, it is clear that also $k\in \sum_{j=1}^n L_j$. It then follows from Theorem~\ref{thm: s.s.o.l. coproduct}, that $\{m,k\}\in L:=\sum_{j=1}^nL_j \in \mathscr{L}^+(\HH)$, so $m\in \mathscr{U}_k(\HH)$.
\end{proof}

\section{Products}\label{sec: products}
In this section we want to show the existence of arbitrary products in $\AtoMon$. We will see that, contrary to the case of coproducts, products in $\AtoMon$ are different from those in $\Mon$. We shall use the term ``direct product'' and the standard symbols $\prod$ and $\times$ whenever we refer to products in $\Mon$, while we shall specify ``product in $\AtoMon$'' and introduce suitable symbols whenever required by the context.

\begin{remark} Let $\mathcal H = \{H_i\}_{i \in I}$ be a family of atomic monoids indexed by a non-empty (possibly infinite) set $I$. We first want to show that the direct product $\prod_{i \in I}H_i$ does not coincide with the (categorical) product in $\AtoMon$. The problem is two-fold: on one hand the direct product of atomic monoids may fail to be atomic and, on the other hand, even if the direct product is an atomic monoid, the canonical projections may fail to be morphisms in $\AtoMon$ (actually this is always the case, unless the product consists of a unique factor). It is routine to check that an element $f \in \prod_{i \in I} H_i$ is an atom if and only if there is $j \in I$ such that $f(j) \in \mathscr A(H_j)$ and $f(i) \in H^\times_i$ for every $i \neq j$. From this, we already get that the canonical projections are not atom-preserving, unless $\mathcal H$ consists of a single monoid. It is also easy to see that if the family of monoids is finite, then the direct product is an atomic monoid. Nevertheless, this is not always the case for arbitrary products. Let assume that $\mathcal H = \{H_i\}_{i \in \mathbb N}$ is a family of atomic monoids such that for each $k \in \mathbb N$ there is an element $x_k \in H_k$ of minimal length factorization equal to $k$ (i.e., $\min \mathsf{L}(x_k)=k$). Then, the element $f \in \prod_{i \in \mathbb N} H_i$ defined by $f(i)=x_i$ for every $i \in \mathbb N$ cannot be written as a finite product of atoms, hence $\prod_{i \in \mathbb N} H_i \notin \AtoMon$.
\end{remark}

We now compute the product in $\AtoMon$ of a family of atomic monoids. Let $\mathcal H = \{H_i\}_{i \in I}$ be a family of atomic monoids indexed by a non-empty (possibly infinite) set $I$. Let us consider the following two subsets of the direct product $\prod_{i\in I}H_i$:
$$
\UPA:=\{f\in \prod_{i\in I}H_i \colon f(i)\in H_i^\times \text{ for every } i\in I\}
$$
and
$$
\APA:=\{f\in \prod_{i\in I}H_i \colon f(i)\in \mathscr{A}(H_i)\text{ for every } i\in I\},
$$
Then, define the monoid $\PA$ as the submonoid of the direct product $\prod_{i\in I}H_i$ generated by $\UPA \cup \APA$. 

\begin{remark}\label{rem: atoms}
    It is clear from Remark~\ref{rem: AtoMon basics}\ref{rem: AtoMon basics i} that $\UPA\APA\UPA=\APA$.
\end{remark}

We will show that $\PA$ is the (object-part of the) product of the family $\mathcal{H}$ in $\AtoMon$, but we first characterize units and atoms of $\PA$ and prove that it is an atomic monoid.
\begin{lemma}\label{lem: units and atoms}
    Let $f$ be an element in the direct product $\prod_{i\in I}H_i$. Then:

\begin{enumerate*}[label=\textup{(\roman{*})}, mode=unboxed]
        \item\label{lem: units and atoms 1} $f$ is a unit of $\PA$ if and only if $f\in\UPA$, so $\PA$ is Dedekind-finite;
\end{enumerate*}

\begin{enumerate*}[label=\textup{(\roman{*})}, mode=unboxed, resume]
        \item\label{lem: units and atoms 2} $f$ is a non-unit of $\PA$ if and only if it is a non-empty finite product of elements of $\APA$;
\end{enumerate*}

\begin{enumerate*}[label=\textup{(\roman{*})}, mode=unboxed, resume]
        \item\label{lem: units and atoms 3} $f$ is an atom of $\PA$ if and only if $f\in \APA$. In particular, the monoid $\PA$ is atomic.
\end{enumerate*}
\end{lemma}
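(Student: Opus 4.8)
The plan is to reduce all three statements to one structural description of $\PA$. Let $P$ denote the set of elements of $\prod_{i\in I}H_i$ that can be written as a non-empty finite product of elements of $\APA$. First I would prove that $\PA = \UPA \cup P$ and that this union is disjoint. The inclusion $\UPA \cup P \subseteq \PA$ is immediate from the definition of $\PA$, so the content is the reverse inclusion, for which it suffices to check that $\UPA \cup P$ is a submonoid of $\prod_{i\in I}H_i$ containing the generating set $\UPA \cup \APA$. Closure under multiplication is obvious when both factors lie in $\UPA$ and when both lie in $P$; in the two mixed cases one absorbs the unit factor into an adjacent element of $\APA$ using the identities $\UPA\,\APA = \APA = \APA\,\UPA$, which come from Remark~\ref{rem: atoms} (equivalently, from Remark~\ref{rem: AtoMon basics}\ref{rem: AtoMon basics i} applied in each coordinate). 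Disjointness is then clear: every coordinate of an element of $\UPA$ is a unit of the corresponding $H_i$, while every coordinate of an element of $P$ is a non-empty product of atoms of $H_i$ and hence a non-unit, since $H_i$ is atomic and therefore Dedekind-finite \cite[Lemma 2.2]{Fan-Tri-2018}; as $I \ne \emptyset$, the two sets cannot meet.

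Granting this description, the three parts fall out quickly. For \ref{lem: units and atoms 1}: if $f \in \UPA$ then $f$ is invertible in $\prod_{i\in I}H_i$ with inverse again in $\UPA \subseteq \PA$, so $f \in \PA^\times$; conversely, a unit of $\PA$ is invertible in the ambient monoid $\prod_{i\in I}H_i$, hence has all coordinates units, so it cannot lie in $P$ and therefore lies in $\UPA$. Dedekind-finiteness of $\PA$ is inherited from $\prod_{i\in I}H_i$, which is Dedekind-finite because each $H_i$ is, and submonoids of Dedekind-finite monoids are Dedekind-finite. Part \ref{lem: units and atoms 2} is then just the computation of the non-units of $\PA$: by \ref{lem: units and atoms 1} and the structural description, $\PA \setminus \PA^\times = (\UPA \cup P)\setminus \UPA = P$, which by definition is the set of non-empty finite products of elements of $\APA$.

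For \ref{lem: units and atoms 3}: if $f \in \APA$, then $f$ is a non-unit of $\PA$ by \ref{lem: units and atoms 1}, and if $f = gh$ with $g, h$ non-units of $\PA$, then by \ref{lem: units and atoms 2} both $g$ and $h$ are non-empty products of elements of $\APA$, so in every coordinate $i$ both $g(i)$ and $h(i)$ are non-units of $H_i$; picking any such $i$ (possible since $I \ne \emptyset$) then realizes the atom $f(i) \in \mathscr A(H_i)$ as a product of two non-units, a contradiction, so $f \in \mathscr A(\PA)$. Conversely, an atom $f$ of $\PA$ is a non-unit, hence by \ref{lem: units and atoms 2} of the form $a_1 \cdots a_n$ with $n \in \mathbb N^+$ and $a_k \in \APA$; if $n \ge 2$ then $f = a_1 \cdot (a_2 \cdots a_n)$ would express $f$ as a product of two non-units of $\PA$ (again by \ref{lem: units and atoms 2}), against atomicity, so $n = 1$ and $f = a_1 \in \APA$. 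That $\PA$ is atomic is now immediate, since by \ref{lem: units and atoms 2} every non-unit of $\PA$ is a product of elements of $\APA = \mathscr A(\PA)$. The only step here that is not routine bookkeeping around Dedekind-finiteness and the ambient direct product is the closure of $\UPA \cup P$ under multiplication, i.e., the unit-absorption identities $\UPA\,\APA = \APA = \APA\,\UPA$, and this is exactly where Remark~\ref{rem: atoms} (hence Remark~\ref{rem: AtoMon basics}\ref{rem: AtoMon basics i}) enters.
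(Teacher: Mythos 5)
Your proof is correct and follows essentially the same route as the paper: you characterize units componentwise, use the unit-absorption identity from Remark~\ref{rem: atoms} to rewrite any non-unit element of $\PA$ as a non-empty product of elements of $\APA$, and deduce \ref{lem: units and atoms 3} from \ref{lem: units and atoms 2} via Dedekind-finiteness of the $H_i$. Packaging this as the disjoint decomposition $\PA=\UPA\cup P$ is just a mild reorganization of the paper's argument, not a different method.
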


\begin{proof}
\ref{lem: units and atoms 1} It is clear that the elements of $\UPA$ are units of $\PA$. On the other hand, if $f$ is a unit of $\PA$, its $i$-th component $f(i)$ is a unit of $H_i$ for every $i\in I$, so $f \in \UPA$.

\ref{lem: units and atoms 2} Every element of $\PA$ is a non-empty finite product of its generators. For $f\in \PA$, we then have $f=f_1 \cdots f_m$, with $f_j\in \UPA\cup \APA$ for every $j\in \llb 1, m\rrb$. If $f$ is not a unit of $\PA$, by item \ref{lem: units and atoms 1}, there is a subset $J\subseteq\llb 1, m\rrb$ with cardinality $n\ge 1$, such that $f_j\in \APA$ for every $j\in J$, and $f_k \in \UPA$ for every $k\in \llb 1,m\rrb \setminus J$. We then obtain from Remark~\ref{rem: atoms} that $f$ is a product of $n$ elements of $\APA$. On the other hand, assume $f=f_1\cdots f_n$ with $n\ge 1$ and $f_1,\ldots, f_n$ in $\APA$. Then $f$ is clearly a non-unit of $\PA$, again by item \ref{lem: units and atoms 1}.

\ref{lem: units and atoms 3} The claim follows immediately from item \ref{lem: units and atoms 2}.
\end{proof}

\begin{theorem}\label{thm:non-empty-products}
 Let $\mathcal{H}=\{H_i\}_{i \in I}$ be a family of monoids in $\AtoMon$ indexed by a non-empty (possibly infinite) set $I$. The product of $\mathcal{H}$ in $\AtoMon$ is given by $(\PA, \{\pi_i : i \in I\})$, where $\pi_i \colon \PA\to H_i$ is the restriction of the canonical projection from the direct product $\prod_{i \in I}H_i$ into $H_i$ for every $i \in I$.
\end{theorem}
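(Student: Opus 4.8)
The plan is to verify the universal property of the product directly, using the structural results already proved about $\PA$. First I would check that $\PA$ is an object of $\AtoMon$: this is exactly Lemma~\ref{lem: units and atoms}\ref{lem: units and atoms 3}, which tells us $\PA$ is atomic (with $\mathscr A(\PA) = \APA$). Next I would verify that each restricted projection $\pi_i \colon \PA \to H_i$ is a morphism in $\AtoMon$. It is clearly a monoid homomorphism (restriction of the canonical projection), so the only thing to check is that it is atom-preserving; but an atom of $\PA$ is an element $f \in \APA$, so $\pi_i(f) = f(i) \in \mathscr A(H_i)$ by definition of $\APA$. Hence the cone $(\PA, \{\pi_i\}_{i\in I})$ lives in $\AtoMon$.

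The heart of the proof is the universal property. Given any atomic monoid $K$ and any family of morphisms $\{\psi_i \colon K \to H_i\}_{i \in I}$ in $\AtoMon$, the universal property of the direct product in $\Mon$ yields a unique monoid homomorphism $\psi \colon K \to \prod_{i \in I} H_i$ with $p_i \circ \psi = \psi_i$ for every $i$, where $p_i$ is the canonical projection. Uniqueness is then automatic (if a factorization exists in $\AtoMon$, it is in particular a factorization in $\Mon$, so it must equal $\psi$). The two things that require genuine argument are: (a) the image of $\psi$ actually lands inside the submonoid $\PA$, so that $\psi$ corestricts to a homomorphism $K \to \PA$; and (b) this corestriction is atom-preserving, hence a morphism in $\AtoMon$.

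For (a), I would argue by cases on an element $x \in K$, using Remark~\ref{rem: AtoMon basics}\ref{rem: AtoMon basics ii}, which says that a morphism in $\AtoMon$ sends units to units, atoms to atoms, and other non-units to other non-units. If $x \in K^\times$, then $\psi_i(x) \in H_i^\times$ for every $i$, so $\psi(x) \in \UPA \subseteq \PA$. If $x \in \mathscr A(K)$, then $\psi_i(x) \in \mathscr A(H_i)$ for every $i$, so $\psi(x) \in \APA \subseteq \PA$. If $x$ is a non-unit that is not an atom, then $K$ being atomic gives $x = a_1 \cdots a_n$ with $a_1, \ldots, a_n \in \mathscr A(K)$ and $n \ge 2$; then $\psi(x) = \psi(a_1) \cdots \psi(a_n)$ is a product of $n$ elements of $\APA$, hence lies in $\PA$. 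This covers all of $K$ since $K = K^\times \cup \mathscr A(K) \cup (K \setminus (K^\times \cup \mathscr A(K)))$. For (b), the same case analysis already shows that an atom of $K$ is sent into $\APA = \mathscr A(\PA)$, so the corestricted map is atom-preserving. This completes the verification, and the main (minor) obstacle is simply keeping the case analysis for membership in $\PA$ clean — in particular remembering that $\PA$ is by definition \emph{generated} by $\UPA \cup \APA$, so products of atoms automatically lie in it, and invoking Remark~\ref{rem: AtoMon basics}\ref{rem: AtoMon basics ii} to control where units, atoms, and composite non-units of $K$ are sent componentwise.
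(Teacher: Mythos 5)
Your proposal is correct and follows essentially the same route as the paper: it uses Lemma~\ref{lem: units and atoms} to see that $\PA$ is atomic with $\mathscr A(\PA)=\APA$ (so the restricted projections preserve atoms), and then verifies the universal property by sending units of $K$ into $\UPA$ and writing non-units as products of atoms, whose images lie in $\APA$, so the induced map corestricts to an atom-preserving morphism $K\to\PA$. The only cosmetic difference is that you route the construction and the uniqueness through the universal property of the direct product in $\Mon$, whereas the paper defines the comparison morphism $\sigma$ componentwise and notes uniqueness directly; the content is the same.
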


\begin{proof}
 First observe that Lemma~\ref{lem: units and atoms} ensures that $\PA\in \AtoMon$, and that the monoid homomorphisms $\pi_i\colon \PA \to H_i$ sending an element $f \in \PA$ to the element $f(i) \in H_i$, are morphisms in $\AtoMon$. Now, given a family $\{\varphi_i\colon H \to H_i\}_{i \in I}$ of morphisms in $\AtoMon$, we define $\sigma\colon H \to \PA $ by the assignment $x \mapsto \sigma(x)$, where $\sigma(x)(i)=\varphi_i(x)$ for every $i\in I$. Let us first verify that $\sigma(x)$ is an element of $\PA$. For $x\in H^\times$, $\varphi_i(x)\in H_i^\times$ for every $i\in I$ so $\sigma(x)\in \UPA\subseteq \PA$. Now, let $x \in H \setminus H^\times$. Since $H$ is atomic, there exist $a_1,\dots, a_n \in \mathscr{A}(H)$ such that $x=a_1\cdots a_n$. Then $\varphi_i(x)=\varphi_i(a_1)\cdots\varphi_i(a_n)$ for every $i\in I$, and $\varphi_i(a_j)\in \mathscr{A}(H_i)$ for every $i\in I$ and for every $j\in \llb 1,n\rrb$. Now, for every $j\in \llb 1,n\rrb$, define $\sigma_j\in \prod_{i\in I}H_i$ by the assignment $\sigma_j(i)=\varphi_i(a_j)$. It is clear that each $\sigma_j\in \APA$, and so $\sigma(x)=\sigma_1\cdots \sigma_n\in \PA$, as desired. It is also routine to check that $\sigma$ is a morphism of $\AtoMon$ and that $\phi_i=\pi_i \cdot \sigma$ for every $i\in I$. Finally, it is clear that $\sigma$ is the unique morphism of $\AtoMon$ satisfying this property.
\end{proof}

Just like the coproduct, the product $\PA$ of $\AtoMon$ also preserves the cancellative properties of the monoids in $\mathcal{H}$.

\begin{proposition}\label{prop: preserved_properties_prod}
    Let {\bf P} be one of the following properties:
\begin{enumerate}[label=\textup{(\arabic{*})}, mode=unboxed]
    \item\label{prop: preserved properties product 1} being acyclic;
    \item\label{prop: preserved properties product 2} being unit-cancellative;
    \item\label{prop: preserved properties product 3} being cancellative.
\end{enumerate}
If $\mathcal{H}=\{H_i\}_{i \in I}$ is a family of monoids in $\AtoMon$ satisfying property {\bf P}, then $\PA$ satisfies {\bf P}. 

\end{proposition}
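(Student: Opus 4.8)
The plan is to exploit that, by construction, $\PA$ is a \emph{submonoid} of the direct product $D := \prod_{i \in I} H_i$, together with the identification of the units of $\PA$ furnished by Lemma~\ref{lem: units and atoms}\ref{lem: units and atoms 1}. In all three cases, {\bf P} has the shape ``whenever some elements satisfy a prescribed equation and one of them is a \emph{non-unit}, a contradiction follows'', and such a property descends from an overmonoid to a submonoid \emph{provided the two agree on which of the elements at hand are non-units}. Here the group of units of $D$ is exactly the set of $I$-tuples all of whose components are units, that is $D^\times = \UPA$; since $\UPA \subseteq \PA$ this gives $D^\times \cap \PA = \UPA$, which by Lemma~\ref{lem: units and atoms}\ref{lem: units and atoms 1} equals $\PA^\times$. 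Hence an element of $\PA$ is a non-unit of $\PA$ precisely when it is a non-unit of $D$, and this observation is the engine of the whole argument.

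First I would verify that $D$ itself inherits {\bf P} from the $H_i$, a routine component-wise computation: for cancellativity one reads off $f = g$ from $fh = gh$ (or $hf = hg$) by cancelling in each coordinate; for unit-cancellativity and acyclicity, a hypothetical failure in $D$ would, in any coordinate $j$ where the distinguished non-unit factor has a non-unit component, yield a failure of the same property in $H_j$. Next I would transfer {\bf P} from $D$ to $\PA$: for cancellativity this is immediate, since cancellativity passes to arbitrary submonoids; for the acyclic and unit-cancellative cases, I would argue by contradiction, noting that any elements of $\PA$ witnessing a failure of {\bf P} in $\PA$ have their distinguished factor a non-unit of $\PA$, hence --- by the opening observation --- a non-unit of $D$, so the very same elements witness a failure of {\bf P} in $D$, contradicting the previous step.

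I do not expect a serious obstacle. The one point that must not be glossed over is that ``submonoids inherit {\bf P}'' is \emph{false in general} for acyclicity and unit-cancellativity --- passing to a submonoid can shrink the group of units, thereby enlarging the class of elements to which the non-unit hypothesis applies --- and it is precisely Lemma~\ref{lem: units and atoms}\ref{lem: units and atoms 1}, giving $\PA^\times = \UPA = D^\times \cap \PA$, that neutralizes this; everything else is component-wise bookkeeping.
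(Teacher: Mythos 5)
Your proposal is correct and follows essentially the same route as the paper, whose proof is simply the remark that one argues component-wise. Your write-up makes explicit the one point that justifies this, namely that Lemma~\ref{lem: units and atoms}\ref{lem: units and atoms 1} gives $\PA^\times = \UPA = \bigl(\prod_{i\in I}H_i\bigr)^\times \cap \PA$, so non-units of $\PA$ have a non-unit component and the component-wise contradiction goes through.
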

\begin{proof}
    The proof is immediate arguing component-wise.
\end{proof}

In the rest of the section we provide a nice description of some arithmetical invariants of $\PA$ in terms of the arithmetical invariants of the elements of $\mathcal{H}$.

\begin{proposition}\label{prop: s.o.l. product}
    Let $\PA$ be the product of the family $\mathcal{H}=\{H_i\}_{i \in I}$ in $\AtoMon$ and let $f$ be an element of $\PA$. Then 
    \[\mathsf{L}_{\PA}(f)=\bigcap_{i\in I} \mathsf{L}_{H_i}(f(i)).\]
\end{proposition}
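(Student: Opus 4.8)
The plan is to leverage the explicit description of the atoms of $\PA$ from Lemma~\ref{lem: units and atoms}\ref{lem: units and atoms 3}, namely that $\mathscr A(\PA)=\APA$: an element of $\PA$ is an atom exactly when every one of its components is an atom in the corresponding factor $H_i$. Since $\PA$ is a submonoid of $\prod_{i\in I}H_i$, multiplication in $\PA$ is computed componentwise, so this characterization will let me translate an atomic factorization of $f$ in $\PA$ into a family of atomic factorizations of the components $f(i)$ in the $H_i$, all of the \emph{same} length, and conversely assemble such a family back into an atomic factorization of $f$. I would first dispose of the case in which $f$ is a unit of $\PA$ — by Lemma~\ref{lem: units and atoms}\ref{lem: units and atoms 1} this means $f\in\UPA$, and then a short check using the conventions $\mathsf L_H(1_H)=\{0\}$ and $\mathsf L_H(u)=\emptyset$ for $u\in H^\times\setminus\{1_H\}$ settles both sides — and then handle the case of a non-unit $f$ by a double inclusion.

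For the non-unit case, note first that each $f(i)$ is then a non-unit of $H_i$ (it is a non-empty product of atoms of $H_i$ by Lemma~\ref{lem: units and atoms}\ref{lem: units and atoms 2}), so every set appearing in the statement is a non-empty subset of $\mathbb N^+$. To prove $\mathsf L_\PA(f)\subseteq\bigcap_{i\in I}\mathsf L_{H_i}(f(i))$, I would take $k\in\mathsf L_\PA(f)$, write $f=g_1\cdots g_k$ with $g_1,\dots,g_k\in\mathscr A(\PA)=\APA$, and evaluate at an arbitrary $i\in I$ to obtain an atomic factorization $f(i)=g_1(i)\cdots g_k(i)$ of length $k$ in $H_i$, so that $k\in\mathsf L_{H_i}(f(i))$. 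For the reverse inclusion, I would take $k$ belonging to every $\mathsf L_{H_i}(f(i))$, fix for each $i$ an atomic factorization $f(i)=a^{(i)}_1\cdots a^{(i)}_k$ with $a^{(i)}_\ell\in\mathscr A(H_i)$, and glue these together into elements $g_\ell\in\prod_{i\in I}H_i$ given by $g_\ell(i):=a^{(i)}_\ell$; each $g_\ell$ lies in $\APA\subseteq\PA$, hence is an atom of $\PA$ by Lemma~\ref{lem: units and atoms}\ref{lem: units and atoms 3}, and $g_1\cdots g_k=f$ because the equality holds componentwise, whence $k\in\mathsf L_\PA(f)$.

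I do not expect a genuine obstacle: the statement is essentially immediate once the atom characterization of $\PA$ is in hand. The only points that require some care are the bookkeeping in the unit case (distinguishing $f=1_\PA$, where both sides equal $\{0\}$, from $f\in\PA^\times\setminus\{1_\PA\}$, where both sides are empty since some component of $f$ is a non-identity unit) and the observation that the glued factors $g_\ell$ genuinely belong to the product object $\PA$ — which is precisely why $\PA$ was defined to be the submonoid of $\prod_{i\in I}H_i$ generated by $\UPA\cup\APA$ rather than something smaller.
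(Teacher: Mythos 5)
Your proposal is correct and follows essentially the same route as the paper's proof: dispose of the unit cases using the conventions on length sets, then prove the two inclusions for a non-unit $f$ by slicing an atomic factorization of $f$ in $\PA$ componentwise and, conversely, gluing componentwise factorizations of equal length into atoms of $\PA$ via the characterization $\mathscr A(\PA)=\APA$ from Lemma~\ref{lem: units and atoms}. No gaps to report.
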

\begin{proof}
    If $f=1_{\PA}$, then $f(i)=1_{H_i}$ for every $i\in I$, so $\mathsf{L}_{\PA}(f)=\mathsf{L}_{H_i}(f(i))=\{0\}$ for every $i\in I$, and we are done. If $f\in \PA^\times \setminus \{1_{\PA}\}$, then $f(i)\in H_i^\times$ for every $i\in I$ and there is at least one $j\in I$ such that $f(j)\ne 1_{H_{j}}$, so $\mathsf{L}_{\PA}(f)=\mathsf{L}_{H_j}(f(j))=\bigcap_{i\in I} \mathsf{L}_{H_i}(f(i))=\emptyset$. Assume then that $f$ is a non-unit of $\PA$ and that $n\in \mathsf L_{\PA}(f)$. By Lemma~\ref{lem: units and atoms}\ref{lem: units and atoms 3}, there exist $\alpha_1, \dots, \alpha_n\in \APA$ such that $f=\alpha_1\cdots\alpha_n$. Then, for every $i\in I$, $f(i)=\alpha_1(i)\cdots \alpha_n(i)$, and since $\alpha_k(i)\in \mathscr{A}(H_i)$ for every $k\in \llb 1, n\rrb$, $n\in \mathsf{L}_{H_i}(f(i))$ for every $i$. This shows that $\mathsf{L}_{\PA}(f)\subseteq \bigcap_{i\in I} \mathsf{L}_{H_i}(f(i))$. On the other hand, assume $n\in \mathsf{L}_{H_i}(f(i))$ for every $i\in I$. This means that for every $i\in I$, $f(i)=a^{(i)}_1\cdots a^{(i)}_n$ with $a^{(i)}_k\in \mathscr{A}(H_i)$ for every $k\in \llb 1, n\rrb$. For each $k\in \llb 1,n\rrb$, define then $\alpha_k\in \PA$ by $\alpha_k(i)=a^{(i)}_k$. By Lemma~\ref{lem: units and atoms}\ref{lem: units and atoms 3} $\alpha_1, \dots, \alpha_n$ are atoms of $\PA$, and $f=\alpha_1\cdots\alpha_n$. Thus $n\in \mathsf{L}_{\PA}(f)$ and this concludes the proof.
\end{proof}

\begin{theorem}\label{thm: s.s.o.l. product}
    Let $\PA$ be the product of a family $\mathcal{H}=\{H_i\}_{i \in I}$ of monoids in $\AtoMon$. Then
\begin{equation*}
\label{equ:system-of-length-sets-of-prods}
    \mathscr{L}(\PA)=\bigsqcap_{i\in I}\mathscr{L}(H_i) := \left\{\bigcap_{i\in I} L_i \colon L_i\in \mathscr{L}(H_i) \text{ for every }i\in I\right\}\setminus \{\emptyset\}.
\end{equation*}   
\end{theorem}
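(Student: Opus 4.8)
The plan is to read off the statement almost directly from Proposition~\ref{prop: s.o.l. product}, which already gives $\mathsf{L}_{\PA}(f)=\bigcap_{i\in I}\mathsf{L}_{H_i}(f(i))$. Thus the theorem reduces to checking that, once the empty set is discarded on both sides, the family of intersections $\bigcap_{i\in I}\mathsf{L}_{H_i}(f(i))$ with $f$ ranging over $\PA$ coincides with the family of intersections $\bigcap_{i\in I}L_i$ with each $L_i$ ranging over $\mathscr{L}(H_i)$. Throughout I would keep in mind the elementary facts recorded in Section~\ref{subsec:factorizations} and the atomicity of the $H_i$: for $x\in H_i$ one has $\mathsf{L}_{H_i}(x)=\{0\}$ iff $x=1_{H_i}$, $\mathsf{L}_{H_i}(x)=\emptyset$ iff $x\in H_i^\times\setminus\{1_{H_i}\}$, and $\emptyset\neq\mathsf{L}_{H_i}(x)\subseteq\mathbb{N}^+$ otherwise.

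For the inclusion $\mathscr{L}(\PA)\subseteq\bigsqcap_{i\in I}\mathscr{L}(H_i)$, I would take $f\in\PA$ with $\mathsf{L}_{\PA}(f)\neq\emptyset$ and write $\mathsf{L}_{\PA}(f)=\bigcap_{i\in I}\mathsf{L}_{H_i}(f(i))$ via Proposition~\ref{prop: s.o.l. product}. Since this intersection is non-empty, no factor $\mathsf{L}_{H_i}(f(i))$ can be empty, so each of them belongs to $\mathscr{L}(H_i)$; hence $\mathsf{L}_{\PA}(f)$ is of the required form.

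For the reverse inclusion, fix $L=\bigcap_{i\in I}L_i\neq\emptyset$ with $L_i\in\mathscr{L}(H_i)$ for every $i$, and use choice to pick $x_i\in H_i$ with $\mathsf{L}_{H_i}(x_i)=L_i$. If $0\in L$, then $0\in L_i$ for all $i$, so $L_i=\{0\}$, hence $L=\{0\}=\mathsf{L}_{\PA}(1_{\PA})\in\mathscr{L}(\PA)$. Otherwise $0\notin L$, so no $L_i$ equals $\{0\}$ (else $L\subseteq L_i=\{0\}$, forcing $L=\{0\}$), and since each $L_i$ is also non-empty, every $x_i$ is a non-unit of $H_i$ with $\mathsf{L}_{H_i}(x_i)\subseteq\mathbb{N}^+$. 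The crucial step is now to verify that the tuple $f:=(x_i)_{i\in I}$ lies in $\PA$ and not merely in the direct product $\prod_{i\in I}H_i$. Here the hypothesis $L\neq\emptyset$ is exactly what is needed: choosing $n\in L\subseteq\bigcap_{i\in I}L_i$, each $x_i$ admits an atomic factorization $x_i=a_1^{(i)}\cdots a_n^{(i)}$ of the \emph{common} length $n$ in $H_i$, so the componentwise tuples $\alpha_k:=(a_k^{(i)})_{i\in I}$ lie in $\APA$ and $f=\alpha_1\cdots\alpha_n\in\PA$ by Lemma~\ref{lem: units and atoms}. Applying Proposition~\ref{prop: s.o.l. product} once more yields $\mathsf{L}_{\PA}(f)=\bigcap_{i\in I}\mathsf{L}_{H_i}(x_i)=\bigcap_{i\in I}L_i=L$, so $L\in\mathscr{L}(\PA)$, completing the argument.

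The only genuine obstacle is precisely this membership $f\in\PA$: a naive choice of a tuple of witnesses need not be a finite product of atoms of the direct product (indeed, as the remark preceding the definition of $\PA$ shows, the direct product of atomic monoids need not even be atomic), and it is the uniform factorization length $n\in\bigcap_{i\in I}L_i$ that produces simultaneous atomic factorizations of equal length in all coordinates, which is exactly the input Lemma~\ref{lem: units and atoms} requires. The remaining bookkeeping — the degenerate case $L=\{0\}$, ruling out non-identity units among the components, and discarding $\emptyset$ on both sides — is routine.
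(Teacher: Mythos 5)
Your proposal is correct and follows essentially the same route as the paper: both directions rest on Proposition~\ref{prop: s.o.l. product}, the case $0\in L$ is handled via $\mathsf{L}_{\PA}(1_{\PA})=\{0\}$, and the key membership $f=(x_i)_{i\in I}\in\PA$ is obtained exactly as in the paper, by using a common length $n\in L$ to factor every coordinate into $n$ atoms and writing $f$ as a product of $n$ elements of $\APA$. Your explicit remark on why a uniform length is needed (the direct product need not be atomic) just makes the paper's implicit point more visible.
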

\begin{proof}
    If $L\in \mathscr{L}(\PA)$ then $L=\mathsf{L}_{\PA}(f)$ for some $f\in \PA$. By Proposition~\ref{prop: s.o.l. product}, $L=\bigcap_{i\in I} \mathsf{L}_{H_i}(f(i))$, so $L \in \bigsqcap_{i\in I}\mathscr{L}(H_i)$. (Note that since $L$ is non-empty, so are the $\mathsf{L}_{H_i}(f(i))$ for every $i\in I$.)
    
    On the other hand, let $L \in \bigsqcap_{i\in I}\mathscr{L}(H_i)$. If $0\in L$, then $L=\{0\}=\mathsf{L}_{\PA}(1_{\PA})\in \mathscr{L}(\PA)$. Assume then that $0\not\in L$. By assumption $\emptyset\ne L=\bigcap_{i\in I}L_i$, where $L_i\in \mathscr{L}(H_i)$, so there exist elements $x_i\in H_i$ such that $L=\bigcap_{i\in I}\mathsf{L}_{H_i}(x_i)$. For every $i\in I$ and every positive integer $n\in L$ (there is at least one), $x_i=a^{(i)}_1\cdots a^{(i)}_n$, where $a^{(i)}_1, \dots, a^{(i)}_n\in \mathscr{A}(H_i)$. Define $f: I\to \bigcup_{i\in I}H_i$ by $f(i):=x_i$. Then $f$ is an element of $\PA$ (it is a product of elements of $\APA$), and  Proposition~\ref{prop: s.o.l. product} implies that $L=\bigcap_{i\in I}\mathsf{L}_{H_i}(x_i)=\mathsf{L}_{\PA}(f)\in \mathscr{L}(\PA)$.
\end{proof}

\begin{theorem}\label{thm: unions product}
    Let $\PA$ be the product of a family $\mathcal{H}=\{H_i\}_{i \in I}$ of monoids in $\AtoMon$. Then
    \[\mathscr{U}_k(\PA)=\bigcap_{i\in I}\mathscr{U}_k(H_i),
    \qquad\text{for every } i \in I.
    \] 
\end{theorem}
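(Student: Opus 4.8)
The plan is to obtain the identity as an essentially immediate consequence of Theorem~\ref{thm: s.s.o.l. product}, once the definition~\eqref{def:unions-of-length-sets} of the unions of length sets is unwound. First I would record the translation: by definition $\mathscr{U}_k(\PA) = \bigcup\{L \in \mathscr{L}(\PA) \colon k \in L\}$, and by Theorem~\ref{thm: s.s.o.l. product} a member $L$ of $\mathscr{L}(\PA)$ is exactly a \emph{non-empty} set of the form $\bigcap_{i \in I} L_i$ with $L_i \in \mathscr{L}(H_i)$ for every $i \in I$. Since such an intersection contains $k$ precisely when $k \in L_i$ for all $i \in I$ (and in that case is automatically non-empty), this yields
\[
\mathscr{U}_k(\PA) = \bigcup\bigl\{{\textstyle\bigcap_{i \in I}} L_i \colon L_i \in \mathscr{L}(H_i) \text{ and } k \in L_i \text{ for every } i \in I\bigr\},
\]
so the whole proof reduces to comparing this with $\bigcap_{i \in I}\mathscr{U}_k(H_i) = \bigcap_{i \in I}\bigcup\{L \in \mathscr{L}(H_i) \colon k \in L\}$.

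For the inclusion ``$\subseteq$'', I would pick $m \in \mathscr{U}_k(\PA)$ and use the displayed description to obtain sets $L_i \in \mathscr{L}(H_i)$ with $k \in L_i$ for all $i$ and $m \in \bigcap_{i \in I} L_i$. Fixing an arbitrary $i \in I$, the set $L_i$ is then a length set of $H_i$ containing $k$, and $m \in L_i$, hence $m \in \mathscr{U}_k(H_i)$; letting $i$ range over $I$ gives $m \in \bigcap_{i \in I}\mathscr{U}_k(H_i)$.

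For ``$\supseteq$'', I would take $m \in \bigcap_{i \in I}\mathscr{U}_k(H_i)$. For each $i \in I$ the definition of $\mathscr{U}_k(H_i)$ supplies a length set $L_i \in \mathscr{L}(H_i)$ with $\{k, m\} \subseteq L_i$. The intersection $\bigcap_{i \in I} L_i$ then contains both $k$ and $m$; in particular it is non-empty, so by Theorem~\ref{thm: s.s.o.l. product} it belongs to $\mathscr{L}(\PA)$, and since it contains $k$ it is one of the length sets over which the union defining $\mathscr{U}_k(\PA)$ is taken. As $m$ lies in it, $m \in \mathscr{U}_k(\PA)$, which completes the argument.

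I do not anticipate a genuine obstacle: the entire combinatorial content is already packaged in Theorem~\ref{thm: s.s.o.l. product}. The only two points deserving attention are that the index set $I$ may be infinite --- which causes no difficulty, since the length sets $L_i$ chosen for the several $i$ need satisfy no compatibility condition --- and the non-emptiness hypothesis required to invoke Theorem~\ref{thm: s.s.o.l. product} in the ``$\supseteq$'' step, which holds because the relevant intersection always contains $k$ (and $m$). The degenerate value $k = 0$ needs no separate treatment, as the same argument applies; indeed one even has $\mathscr{U}_0(H) = \{0\}$ for every $H \in \AtoMon$, because $\{0\}$ is the only length set containing $0$.
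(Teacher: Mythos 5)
Your proposal is correct and takes essentially the same route as the paper's own proof: both rewrite $\mathscr{U}_k(\PA)$ as the union of those sets $\bigcap_{i\in I}L_i$ with $L_i\in\mathscr{L}(H_i)$ and $k\in L_i$ for all $i$ (via Theorem~\ref{thm: s.s.o.l. product} and the definition \eqref{def:unions-of-length-sets}), and then compare this with $\bigcap_{i\in I}\mathscr{U}_k(H_i)$ by choosing, in the reverse inclusion, a length set $L_i\supseteq\{k,m\}$ for each $i$ and noting the intersection is non-empty because it contains $k$. The remarks about the infinite index set and the case $k=0$ are harmless additions consistent with the paper's argument.
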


\begin{proof}
Fix $k \in \mathbb N$ and set $\Omega_k := \bigcap_{i \in I} \mathscr{U}_k(H_i)$. We need to show that $n \in \mathscr U_k(\PA)$ if and only if $n \in \Omega_k$. To begin, we have from Theorem \ref{thm: s.s.o.l. product} and Eq.~\eqref{def:unions-of-length-sets} in Section \ref{subsec:factorizations} that

\begin{equation}
\label{equ:unions-of-length-sets-in-prods}
\mathscr{U}_k(\PA)=\bigcup\left\{L\in \bigsqcap_{i \in I}\mathscr{L}(H_i) \colon k\in L\right\}.
\end{equation}
If $n \in \mathscr{U}_k(\PA)$, then Eq.~\eqref{equ:unions-of-length-sets-in-prods} guarantees the existence of a sequence $(L_i)_{i \in I}$ of subsets of $\mathbb N$ such that $n \in \bigcap_{i \in I} L_i$ and $k \in L_i \in \mathscr{L}(H_i)$. It follows that $n \in \mathscr{U}_k(H_i)$ for all $i \in I$, and hence $n \in \Omega_k$.

Conversely, if $n \in \Omega_k$,
then for every $i\in I$ there is a set $L_i\in \mathscr{L}(H_i)$ such that $\{n,k\}\subseteq L_i$, with the result that $\{n,k\}\subseteq \bigcap_{i\in I} L_i\in \bigsqcap_{i\in I}\mathscr{L}(H_i)$ and hence $n\in \mathscr{U}_k(\PA)$.
\end{proof}

\section{Other limits and colimits}\label{sec: limits and colimits}
In this section we prove that the category $\AtoMon$ is both complete and cocomplete, that is, it admits all limits and colimits. We start by coequalizers and pushouts, showing that they coincide with those in $\Mon$. In particular, all colimits are computed as in $\Mon$. Then, we prove that the category $\AtoMon$ admits equalizers and pullbacks, hence all limits. Contrary to the case of colimits, limits in $\AtoMon$ are different from those in $\Mon$ (as already noticed for the terminal object and products), and we provide an explicit description for equalizers and pullbacks.

\subsection{Other colimits: coequalizers and pushouts}

\begin{proposition}[Coequalizers] \label{prop: coequalizers}
    Let $f,g\colon H \rightrightarrows K$ be a pair of parallel arrows in $\AtoMon$. Then, the coequalizer of $f$ and $g$ in $\AtoMon$ coincides with the coequalizer of $f$ and $g$ in $\Mon$.
\end{proposition}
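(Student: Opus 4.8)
The plan is to take $(Q,q)$ to be the coequalizer of $f,g$ in $\Mon$ — concretely $Q=K/{\sim}$, where $\sim$ is the smallest monoid congruence on $K$ with $f(h)\sim g(h)$ for every $h\in H$, and $q\colon K\to Q$ is the canonical projection — and to prove that this same pair is a coequalizer of $f,g$ in $\AtoMon$. Since $\AtoMon$ is a subcategory of $\Mon$, the argument splits into three parts: (a) $Q$ is atomic, so $Q\in\AtoMon$; (b) $q$ is atom-preserving, so $q\colon K\to Q$ is a morphism of $\AtoMon$ with $qf=qg$; and (c) for every morphism $t\colon K\to L$ of $\AtoMon$ with $tf=tg$, the unique monoid homomorphism $\bar t\colon Q\to L$ with $\bar t q=t$ given by the universal property in $\Mon$ is itself atom-preserving (uniqueness in $\AtoMon$ then being inherited from $\Mon$).

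The first move is to simplify the congruence. Since $H$ is atomic, every non-unit of $H$ is a product of atoms, and $f,g$ are compatible with products, so each relation $f(h)\sim g(h)$ with $h$ a non-unit of $H$ is a consequence of those attached to atoms of $H$; hence $\sim$ is generated by the pairs $(f(h),g(h))$ with $h\in H^\times\cup\mathscr A(H)$. By Remark~\ref{rem: AtoMon basics}\ref{rem: AtoMon basics ii}, every such generating pair consists either of two units of $K$ or of two atoms of $K$.

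The technical core — and the step I expect to be the main obstacle — is then the claim that the $\sim$-class of a unit of $K$ contains only units and the $\sim$-class of an atom of $K$ contains only atoms. I would prove it by analyzing a single elementary move $x\mapsto y$, that is, $x=psq$ and $y=prq$ in $K$ for some $p,q\in K$ and some generating pair $\{r,s\}$, using repeatedly that $K$ is Dedekind-finite (being atomic) together with Remark~\ref{rem: AtoMon basics}\ref{rem: AtoMon basics i}. If $x$ is a unit, then $s$ cannot be an atom — otherwise $sq$ would be a non-unit and $x=p(sq)$ could not be a unit — so the generating pair is a pair of units, $r$ is a unit, Dedekind-finiteness forces $p$ and $q$ to be units as well, and hence $y=prq$ is a unit. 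If $x$ is an atom, one checks (according to whether $s$ is a unit or an atom, and, in the former case, according to whether $ps$ or $q$ is a unit in the factorization $x=(ps)q$) that in each case $y=prq$ can be rewritten, via Remark~\ref{rem: AtoMon basics}\ref{rem: AtoMon basics i}, as a product of an atom with units, hence is itself an atom. Passing to the transitive closure of the elementary-move relation yields the claim.

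Granting the claim, the rest is routine. No non-unit of $K$ is sent by $q$ to a unit of $Q$: if $q(k)$ were a unit, then $kk'$ would lie in the $\sim$-class of $1_K$ for some $k'$, hence be a unit, forcing $k$ to be a unit by Dedekind-finiteness; so $q$ preserves units, non-units, and atoms. Writing a non-unit of $Q$ as $q(k)$ with $k$ a non-unit of $K$ and factoring $k$ into atoms of $K$ exhibits it as a product of atoms of $Q$, so $Q$ is atomic; this gives (a) and (b). Moreover every atom of $Q$ is the $q$-image of an atom of $K$ — a factorization into two or more atoms of $Q$ would be reducible — so for $t$ and $\bar t$ as above and any atom $c$ of $K$ one has $\bar t(q(c))=t(c)\in\mathscr A(L)$, which proves that $\bar t$ is atom-preserving and settles (c). Since $(Q,q)$ coequalizes $f,g$ in $\Mon$ and every $\AtoMon$-morphism coequalizing $f$ and $g$ factors through $q$ uniquely — with, by the above, the factorization living in $\AtoMon$ — we conclude that $(Q,q)$ is the coequalizer of $f,g$ in $\AtoMon$.
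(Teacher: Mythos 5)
Your proposal is correct and follows essentially the same route as the paper: both take the $\Mon$-coequalizer $Q=K/{\sim}$ and show, by analyzing chains of elementary rewritings using Dedekind-finiteness and Remark~\ref{rem: AtoMon basics}, that the $\sim$-class of a unit of $K$ contains only units and the class of an atom only atoms, whence $Q$ is atomic and the projection is atom-preserving. Your preliminary reduction of the generating pairs to those coming from $H^\times\cup\mathscr A(H)$ and your explicit verification that the induced morphism $\bar t$ is atom-preserving are only minor additions (the paper treats arbitrary $h\in H$ directly and leaves the latter step implicit), not a genuinely different method.
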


\begin{proof}
    The object-part of the coequalizer of $f$ and $g$ in $\Mon$ is the quotient $Q:=K/{\sim}$, where $\sim$ is the smallest congruence on $K$ containing the pairs $(f(h), g(h))$ for all $h \in H$ (see \cite[Section~8.2]{Howie}). Let $x \sim y$ in $K$. Then, there exist $n \in \mathbb{N}$, elements $x_0, x_1, \dots, x_n, a_1, \dots, a_n, b_1, \dots, b_n \in K$ and $h_1, \dots h_n \in H$ such that $x_0=x, x_n=y$ and $\{  x_{i-1}, x_i  \} = \{ a_if(h_i)b_i, a_ig(h_i)b_i \}$ for every $i \in \llb 1, n-1\rrb$. We distinguish two cases.
    
    \textsc{Case 1:} $x \in K^\times$. From the Dedekind-finiteness and Remark \ref{rem: AtoMon basics}\ref{rem: AtoMon basics ii}, we get $a_1, b_1, f(h_1), g(h_1) \in K^\times$ (note that $h_1\in H^\times$), hence $x_1 \in K^\times$. By iterating this process, we can then conclude that $y \in K^\times$. Thus, if $[u]\in Q^\times$, there then exists $v \in K$ such that $1\sim uv$, hence $u, v \in K^\times$. This shows that $[x]\in Q^\times$ if and only if $x \in K^\times$, and in particular $Q$ is Dedekind-finite.

    \textsc{Case 2:} $x \in \mathscr{A}(K)$. Set $x=x_0=a_1 f(h_1)b_1$ and $x_1=a_1g(h_1)b_1$. Only two cases can occur. If $a_1f(h_1) \in K^\times$ and $b_1 \in \mathscr{A}(K)$, arguing as in Case 1, we get that $a_1 g(h_1)\in K^\times$ and so $x_1$ is an atom. On the other hand, if $b_1 \in K^\times$ and $a_1f(h_1) \in \mathscr{A}(K)$, we need to distinguish two subcases depending on whether $f(h_1)$ is a unit or an atom of $K$. If $f(h_1)\in K^\times$, then $a_1 \in \mathscr{A}(K)$ and the same argument as before allow us to conclude that $g(h_1) \in K^\times$ and $x_1$ is an atom. If $f(h_1) \in \mathscr{A}(K)$ ( and so $a_1 \in K^\times$), then $h_1 \in \mathscr{A}(H)$ by Remark \ref{rem: AtoMon basics}\ref{rem: AtoMon basics ii}, hence $g(h_1) \in \mathscr{A}(K)$, and so $x_1 \in \mathscr{A}(K)$.
    By iterating this process, we can then conclude that $y \in \mathscr{A}(K)$. This implies that if $x \in \mathscr{A}(K)$, then $[x] \in \mathscr{A}(Q)$. Indeed, if $[x]=[y][z]$ for some $y,z \in K$, then $yz \in \mathscr{A}(K)$, forcing $[y]$ or $[z]$ to be in $Q^\times$ by the characterization of invertible elements in $Q$ we have done before.

    Moreover, it is immediate to check that if $x$ is a non-unit, non-atom of $K$, then $[x]$ is not an atom in $Q$. This shows at once that $Q$ is an atomic monoid and that the canonical projection $\pi \colon K \to Q$ is a morphism in $\AtoMon$.
    \end{proof}

\begin{proposition}[Pushouts] \label{prop: pushouts}
    Let $f\colon L \to H$ and $g\colon L \to K$ be two morphisms in $\AtoMon$. Then, the pushout of $f$ and $g$ in $\AtoMon$ coincides with the pushout of $f$ and $g$ in $\Mon$.
\end{proposition}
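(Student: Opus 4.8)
The plan is to reduce the statement to the two colimit computations already in hand: coproducts (Theorem~\ref{thm:non-empty-coproducts}) and coequalizers (Proposition~\ref{prop: coequalizers}). Recall the standard categorical fact that, in any category admitting the relevant colimits, the pushout of a span $H \xleftarrow{f} L \xrightarrow{g} K$ is computed as the coequalizer of the parallel pair
$$
e_H\circ f,\quad e_K\circ g\colon L \rightrightarrows H \coprod K,
$$
where $e_H\colon H\to H\coprod K$ and $e_K\colon K\to H\coprod K$ are the coproduct coprojections (see, e.g., \cite{Borceux}); the pushout maps are then the composites of $e_H$ and $e_K$ with the coequalizer projection. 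This is a purely formal assertion about universal properties, hence valid in $\AtoMon$ as soon as the coproduct $H\coprod K$ and the coequalizer of that pair exist in $\AtoMon$.

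First I would note that, by Theorem~\ref{thm:non-empty-coproducts}, the coproduct of $H$ and $K$ in $\Mon$ is also their coproduct in $\AtoMon$, and that (as recorded in the proof of that theorem) the coprojections $e_H,e_K$ are morphisms of $\AtoMon$. Since $\AtoMon$ is closed under composition, $e_H\circ f$ and $e_K\circ g$ are morphisms of $\AtoMon$, so Proposition~\ref{prop: coequalizers} applies to this pair: its coequalizer in $\Mon$ --- an atomic monoid with atom-preserving canonical projection --- is also its coequalizer in $\AtoMon$. Chaining these identifications, the pushout of $f$ and $g$ in $\Mon$ is the coequalizer of $(e_H\circ f,\,e_K\circ g)$ in $\Mon$, which equals the coequalizer of the same pair in $\AtoMon$, which is the pushout of $f$ and $g$ in $\AtoMon$. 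Hence the two pushouts --- objects and structure maps alike --- coincide.

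There is no real computational obstacle here; the only point requiring care is the bookkeeping that the ``coequalizer of a coproduct'' presentation of the pushout genuinely transfers to $\AtoMon$ --- that every arrow occurring in it lives in $\AtoMon$ and that the colimits used are exactly those furnished by Theorem~\ref{thm:non-empty-coproducts} and Proposition~\ref{prop: coequalizers}. Once this is checked, the universal property of the pushout in $\AtoMon$ is a formal consequence of those of the coproduct and the coequalizer, and no separate verification (e.g. that the pushout object is atomic, or that its structure maps preserve atoms) is needed beyond what Proposition~\ref{prop: coequalizers} already provides.
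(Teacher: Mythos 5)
Your proof is correct and follows essentially the same route as the paper: both realize the pushout as the coequalizer of $e_H\circ f$ and $e_K\circ g$ on the coproduct $H\coprod K$, and then invoke Theorem~\ref{thm:non-empty-coproducts} and Proposition~\ref{prop: coequalizers} to conclude that both constructions coincide with those in $\Mon$.
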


\begin{proof}
    Recall that in any category $\Cal C$ with coequalizers and (binary) coproducts, the pushout of two arrows $f\colon L \to H$ and $g\colon L \to K$ can be constructed as follows (see  \cite[Section~2]{Borceux}): we first consider the coproduct of $H$ and $K$
    $$
    \xymatrix{H \ar[r]^-{e_H} & H \coprod K & K \ar[l]_-{e_K}}
    $$
    and then the coequalizer $q\colon H\coprod K \to Q$ of the parallel morphisms $e_H f$ and $e_K g$. Then, the commutative square
    $$
    \xymatrix{
    L \ar[r]^f  \ar[d]_g & H \ar[d]^{q e_H} \\
    K \ar[r]_-{q e_K} & Q
    }
    $$
    is a pushout in $\Cal C$. Since coproducts and coequalizers in $\AtoMon$ exist and coincide with those in $\Mon$, the conclusion follows.
\end{proof}

We showed in Theorem \ref{thm:non-empty-coproducts} and Proposition \ref{prop: coequalizers} that arbitrary coproducts and coequalizers exist in $\AtoMon$. Thus, the next result follows from the dual of \cite[Theorem~2.8.1]{Borceux}.

\begin{theorem} \label{thm: cocomplete}
    The category $\AtoMon$ is cocomplete, that is, all colimits exist in $\AtoMon$. Moreover, colimits are computed as in $\Mon$.
\end{theorem}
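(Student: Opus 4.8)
The plan is to invoke the dual of the standard completeness criterion \cite[Theorem~2.8.1]{Borceux}, according to which a category is cocomplete precisely when it admits all small coproducts (the empty one included, i.e.\ an initial object) and all coequalizers of pairs of parallel arrows. All three ingredients are already in place: Proposition~\ref{prop: initial and terminal} supplies the initial object $\mathbb 0$ (which is the empty coproduct), Theorem~\ref{thm:non-empty-coproducts} supplies all non-empty small coproducts, and Proposition~\ref{prop: coequalizers} supplies all coequalizers. Hence $\AtoMon$ is cocomplete, and the only thing left to spell out is why an arbitrary colimit is ``computed as in $\Mon$''.

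For that second assertion I would recall the classical construction of a colimit from coproducts and coequalizers. Given a small diagram $D\colon \mathcal J \to \AtoMon$, form the two coproducts $\coprod_{j \in \mathrm{ob}\,\mathcal J} D(j)$ and $\coprod_{u\colon j \to k \text{ in } \mathcal J} D(j)$, together with the two canonical parallel morphisms between them (one built from the identities, one built from the arrows $D(u)$); the colimit of $D$ is the coequalizer of this pair. By Theorem~\ref{thm:non-empty-coproducts} both coproducts, and the coprojections defining the two structure maps, are exactly the ones in $\Mon$; by Proposition~\ref{prop: coequalizers} the ensuing coequalizer is exactly the one in $\Mon$. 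Therefore the colimit object and the colimit cocone produced in $\AtoMon$ literally coincide with those produced by the same recipe in $\Mon$, i.e.\ the inclusion functor $\AtoMon \hookrightarrow \Mon$ carries colimit cocones in $\AtoMon$ to colimit cocones in $\Mon$. This is the precise meaning of ``colimits are computed as in $\Mon$''.

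There is essentially no genuine obstacle here; the work has all been done in the earlier results, and what remains is bookkeeping. The one point that deserves an explicit word is that the construction just described does produce the colimit \emph{inside} $\AtoMon$: this is automatic because the coproducts and the coequalizer used are genuine colimits in $\AtoMon$ (not merely in $\Mon$), so the universal property of the iterated construction holds in $\AtoMon$ as well. I would also note in passing that, combined with Remark~\ref{rem: AtoMon basics}, this gives the promised characterization of epimorphisms in $\AtoMon$ as exactly the epimorphisms of $\Mon$.
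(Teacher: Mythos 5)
Your proposal is correct and follows essentially the same route as the paper: both deduce cocompleteness from the dual of \cite[Theorem~2.8.1]{Borceux} using the coproducts of Theorem~\ref{thm:non-empty-coproducts} and the coequalizers of Proposition~\ref{prop: coequalizers}. Your extra paragraph spelling out the coproduct--coequalizer construction of a general colimit merely makes explicit the justification the paper leaves implicit for the clause ``colimits are computed as in $\Mon$.''
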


\subsection{Other limits: equalizers and pullbacks}

\begin{proposition}[Equalizers]\label{prop: equalizers}
    Let $f,g\colon H \rightrightarrows K$ be a pair of parallel arrows in $\AtoMon$. We define
    $$
    \mathscr{A} _{f,g} (H):=\{ x \in \mathscr{A}(H)\mid f(x)=g(x) \} \quad \text{ and } \quad  H^\times_{f,g}:=\{ x \in H^\times : f(x)=g(x) \}.
    $$
    Then, the equalizer of $f$ and $g$ in $\AtoMon$ exists and is given by the diagram
    $$
    \xymatrix{
    E(f,g) \ar@{>->}[r]^-{e} & H \ar@<0.6ex>[r]^{f} \ar@<-0.6ex>[r]_{g} & K
    }
    $$
    where $E(f,g)$ is the submonoid of $H$ generated by $\mathscr{A} _{f,g}(H)$ and $H^\times_{f,g}$, and $e$ is the inclusion map.
\end{proposition}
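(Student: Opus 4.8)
The plan is to verify the four conditions defining an equalizer: that $E(f,g)$ lies in $\AtoMon$, that $e$ is an $\AtoMon$-morphism, that $fe=ge$, and that $(E(f,g),e)$ enjoys the universal property. Everything hinges on first determining the units and atoms of $E(f,g)$. I would begin by noting that $H^\times_{f,g}$ is a subgroup of $H^\times$ (products are immediate, and $f(x^{-1})=f(x)^{-1}=g(x)^{-1}=g(x^{-1})$ handles inverses), and that $f$ and $g$ agree on every generator of $E(f,g)$, hence on all of $E(f,g)$; this gives $fe=ge$ at once. Writing an arbitrary $x\in E(f,g)$ as a product $c_1\cdots c_m$ of generators and invoking the Dedekind-finiteness of the atomic monoid $H$ (Remark~\ref{rem: AtoMon basics}\ref{rem: AtoMon basics i}), one sees that $x$ is a unit of $H$ --- equivalently, since $E(f,g)\subseteq H$, a unit of $E(f,g)$ --- exactly when every $c_j$ lies in $H^\times_{f,g}$, in which case $x\in H^\times_{f,g}$. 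Thus the group of units of $E(f,g)$ is precisely $H^\times_{f,g}$, and $E(f,g)$ is Dedekind-finite.

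The core of the argument is the identification $\mathscr{A}(E(f,g))=\mathscr{A}_{f,g}(H)$. One inclusion is easy: if $a\in\mathscr{A}_{f,g}(H)$ and $a=xy$ in $E(f,g)$, then $a$ being an atom of $H$ forces one of $x,y$ to be a unit of $H$ lying in $E(f,g)$, hence a unit of $E(f,g)$, so $a$ is an atom of $E(f,g)$. For the converse, given an atom $b$ of $E(f,g)$, write $b=c_1\cdots c_m$ with each $c_j$ in $\mathscr{A}_{f,g}(H)\cup H^\times_{f,g}$; the set $J$ of indices $j$ with $c_j\in\mathscr{A}_{f,g}(H)$ is non-empty (otherwise $b$ is a unit), and I would absorb each maximal block of consecutive unit-generators into the following atom-generator, sending any trailing units into the last atom. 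By Remark~\ref{rem: AtoMon basics}\ref{rem: AtoMon basics i} each resulting block is an atom of $H$, and it lies in $\mathscr{A}_{f,g}(H)$ since $f$ and $g$ agree on each of its factors; this presents $b$ as a product of $|J|$ atoms of $E(f,g)$, whence $|J|=1$ and $b\in\mathscr{A}_{f,g}(H)$. The very same absorption argument applied to an arbitrary non-unit of $E(f,g)$ exhibits it as a non-empty product of atoms, so $E(f,g)\in\AtoMon$; and since $\mathscr{A}(E(f,g))=\mathscr{A}_{f,g}(H)\subseteq\mathscr{A}(H)$, the inclusion $e$ is atom-preserving. I expect this unit-absorption-and-counting bookkeeping to be the main obstacle: more tedious than deep, but the step where care is genuinely needed.

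For the universal property, let $\varphi\colon T\to H$ be a morphism in $\AtoMon$ with $f\varphi=g\varphi$. Since $e$ is injective, any factorization $\varphi=e\psi$ forces $\psi=\varphi$ as a map, so uniqueness is automatic and it only remains to show that $\varphi$ corestricts to a morphism $T\to E(f,g)$ in $\AtoMon$. Using that $T$ is atomic: a unit $t$ of $T$ has $\varphi(t)\in H^\times$ by Remark~\ref{rem: AtoMon basics}\ref{rem: AtoMon basics ii} with $f(\varphi(t))=g(\varphi(t))$, so $\varphi(t)\in H^\times_{f,g}$; an atom $a$ of $T$ has $\varphi(a)\in\mathscr{A}(H)$ with $f(\varphi(a))=g(\varphi(a))$, so $\varphi(a)\in\mathscr{A}_{f,g}(H)$; and a general non-unit of $T$, being a product of atoms, maps into $E(f,g)$. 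Hence $\varphi(T)\subseteq E(f,g)$, the corestriction $\psi$ is a monoid homomorphism, and it sends $\mathscr{A}(T)$ into $\mathscr{A}_{f,g}(H)=\mathscr{A}(E(f,g))$, so $\psi$ is a morphism of $\AtoMon$. This completes the verification.
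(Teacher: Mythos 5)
Your proposal is correct and follows essentially the same route as the paper's proof: identify the units and atoms of $E(f,g)$ as $H^\times_{f,g}$ and $\mathscr{A}_{f,g}(H)$, check that the inclusion is an atom-preserving equalizing map, and obtain the universal property by noting that any equalizing $\AtoMon$-morphism sends units into $H^\times_{f,g}$ and atoms into $\mathscr{A}_{f,g}(H)$, hence lands in $E(f,g)$. The only difference is that the paper dismisses the identification of $\mathscr{A}(E(f,g))$ and the atomicity of $E(f,g)$ as ``clear,'' whereas you spell out the unit-absorption argument in detail, which is a faithful (and welcome) expansion rather than a different approach.
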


\begin{proof}
    The submonoid $E(f,g)$ of $H$ generated by $\mathscr{A} _{f,g}(H)$ and $H^\times_{f,g}$ is clearly atomic (the atoms of $E(f,g)$ are precisely the elements in $\mathscr{A}_{f,g}(H)$), and the inclusion $e \colon E(f,g)\hookrightarrow H$ is a morphism in $\AtoMon$ equalizing $f$ and $g$.
    
    Now, if $\alpha\colon W \to H$ is a morphism in $\AtoMon$ such that $f\alpha=g\alpha$, then we must have $\alpha(\mathscr{A}(W))\subseteq \mathscr{A} _{f,g}(H)$ and $\alpha(W^\times) \subseteq  H^\times_{f,g}$, which implies that $\alpha$ factors through $e$. The uniqueness of this factorization is clear, since $e$ is a monomorphism.
\end{proof}

Pullbacks in $\AtoMon$ can be explicitly described with the same idea (notice that the existence of pullbacks in $\AtoMon$ is already guaranteed by the existence of products and equalizers \cite[Section~2]{Borceux}).

\begin{proposition}[Pullbacks]\label{prop: pullbacks}
    Let $f\colon H \to L$ and $g\colon K \to L$ be two morphisms in $\AtoMon$. Then, the pullback of $f$ and $g$ in $\AtoMon$ is given by the diagram
    $$
    \xymatrix{
    P \pullbackcorner \ar[r]^-{p_2} \ar[d]_-{p_1} & K \ar[d]^g\\
    H \ar[r]^f & L
    }
    $$
    where $P$ is the submonoid of the direct product $H \times K$ generated by 
    $$
    \mathscr{A}_P :=\{ (x, y) \in \mathscr{A}(H)\times \mathscr{A}(K) : f(x)=g(y)  \} \quad \text{ and } \quad U_P :=\{ (x,y) \in H^\times \times K^\times : f(x)=g(x) \},
    $$
    and $p_1, p_2$ are the restrictions of the canonical projections.
\end{proposition}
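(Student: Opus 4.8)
The plan is to obtain the pullback from the product and the equalizer that $\AtoMon$ is now known to admit, and then to make the resulting object explicit, identifying it with the submonoid $P$ in the statement. Recall (see \cite[Section~2]{Borceux}) that in any category with binary products and equalizers, the pullback of $f\colon H\to L$ and $g\colon K\to L$ can be realized as the equalizer of the pair
$$
f\circ q_H,\ g\circ q_K\colon \mathsf{P}_{\{H,K\}}\rightrightarrows L,
$$
where $\mathsf{P}_{\{H,K\}}$ denotes the product of $H$ and $K$ and $q_H,q_K$ are its two projections; the legs of the pullback cone are then $q_H$ and $q_K$ precomposed with the equalizer inclusion. Since $\AtoMon$ has products (Theorem~\ref{thm:non-empty-products}) and equalizers (Proposition~\ref{prop: equalizers}), this construction is available, and it remains only to compute it.

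By Theorem~\ref{thm:non-empty-products}, $\mathsf{P}_{\{H,K\}}$ is a submonoid of the direct product $H\times K$, and by Lemma~\ref{lem: units and atoms} its units are exactly the pairs $(u,v)$ with $u\in H^\times$ and $v\in K^\times$, while its atoms are exactly the pairs $(a,b)$ with $a\in\mathscr{A}(H)$ and $b\in\mathscr{A}(K)$. Applying Proposition~\ref{prop: equalizers} to $f\circ q_H$ and $g\circ q_K$, and using that $q_H(x,y)=x$ and $q_K(x,y)=y$, the equalizer object is the submonoid of $\mathsf{P}_{\{H,K\}}$ generated by the atoms $(a,b)$ with $f(a)=g(b)$ and the units $(u,v)$ with $f(u)=g(v)$; equivalently, it is generated by $\mathscr{A}_P$ and $U_P$. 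Since $\mathscr{A}_P$ and $U_P$ are contained in $H\times K$, the submonoid they generate inside $\mathsf{P}_{\{H,K\}}$ coincides with the submonoid $P$ of $H\times K$ that they generate; and the pullback legs are, by construction, the restrictions to $P$ of the canonical projections $H\times K\to H$ and $H\times K\to K$. This is exactly the diagram in the statement.

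The one point that genuinely needs care is that $P$ lies in $\AtoMon$ with $\mathscr{A}(P)=\mathscr{A}_P$; this runs exactly as in the proof of Lemma~\ref{lem: units and atoms}, the key identity $U_P\,\mathscr{A}_P\,U_P=\mathscr{A}_P$ being a consequence of the Dedekind-finiteness of $H$ and $K$ and the fact that $f$ and $g$ are monoid homomorphisms. The remaining step, namely that the atoms (resp.\ units) of $\mathsf{P}_{\{H,K\}}$ equalized by the pair above are precisely the elements of $\mathscr{A}_P$ (resp.\ $U_P$), is immediate from $q_H(x,y)=x$ and $q_K(x,y)=y$. Alternatively, one may bypass the product and check the universal property of $P$ by hand: given a commutative square with apex $W$ and legs $\alpha\colon W\to H$, $\beta\colon W\to K$ with $f\alpha=g\beta$, the assignment $w\mapsto(\alpha(w),\beta(w))$ sends $W^\times$ into $U_P$ and sends a non-unit $w=c_1\cdots c_n$ (with $c_j\in\mathscr{A}(W)$) to the product of the elements $(\alpha(c_j),\beta(c_j))\in\mathscr{A}_P$; it thus corestricts to a morphism $\sigma\colon W\to P$ in $\AtoMon$ with $p_1\sigma=\alpha$ and $p_2\sigma=\beta$, unique because $p_1$ and $p_2$ are jointly injective. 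Beyond this there is no substantial obstacle; the content of the result is essentially carried by Theorem~\ref{thm:non-empty-products} and Proposition~\ref{prop: equalizers}.
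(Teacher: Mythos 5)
Your proof is correct, but its main route differs from the paper's. You realize the pullback as the equalizer of $f\circ q_H$ and $g\circ q_K$ on the binary product in $\AtoMon$ and then compute that object explicitly: Lemma~\ref{lem: units and atoms} identifies the units and atoms of the product as the pairs of units and pairs of atoms, and Proposition~\ref{prop: equalizers} then says the equalizer is generated by exactly those units and atoms that are equalized, i.e.\ by $U_P$ and $\mathscr{A}_P$ (your reading of $U_P$ as the unit pairs with $f(x)=g(y)$ is the intended one; the displayed condition $f(x)=g(x)$ in the statement is evidently a typo). The paper, by contrast, invokes the product-plus-equalizer construction only for abstract existence (in the remark preceding the proposition) and proves the explicit description directly: it notes that $P$ is atomic with $\mathscr{A}(P)=\mathscr{A}_P$, that $p_1,p_2$ are atom-preserving with $fp_1=gp_2$, and then verifies the universal property by checking that for $\alpha\colon W\to H$ and $\beta\colon W\to K$ with $f\alpha=g\beta$ the assignment $w\mapsto(\alpha(w),\beta(w))$ lands in $P$ and is the unique mediating morphism --- which is precisely the alternative verification you sketch at the end of your proposal. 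Your main route buys economy: all the new content reduces to reading off which atoms and units of the product are equalized, everything else being delegated to Theorem~\ref{thm:non-empty-products} and Proposition~\ref{prop: equalizers}; the paper's direct check is shorter, self-contained, and makes the mediating morphism explicit. In both arguments the essential structural fact is the same, namely that $P$ is atomic with $\mathscr{A}(P)=\mathscr{A}_P$ (via $U_P\,\mathscr{A}_P\,U_P=\mathscr{A}_P$ and Dedekind-finiteness), which in your route is absorbed into the proof of Proposition~\ref{prop: equalizers}.
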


\begin{proof}
    The submonoid $P$ of the direct product $H \times K$ generated by $\mathscr{A}_P$ and $U_P$ is clearly atomic, and $p_1, p_2$ are morphisms in $\AtoMon$, being the atoms of $P$ precisely the elements in $\mathscr{A}_P$. Moreover, it is clear that $f p_1=g p_2$.
    
    Now, let $\alpha\colon W\to H$ and $\beta\colon W \to K$ be two morphisms in $\AtoMon$ such that $f\alpha=g\beta$. Since $\alpha(\mathscr{A}(W))\subseteq \mathscr{A}(H)$ and $\beta(\mathscr{A}(W)) \subseteq \mathscr{A}(K)$, the map $\omega\colon W \to P$ defined by $w \mapsto (\alpha(w), \beta(w))$ is a morphism in $\AtoMon$ such that $\alpha=p_1 \omega$ and $\beta= p_2 \omega$. The uniqueness of $\omega$ is clear.
\end{proof}

\begin{theorem} \label{thm: complete}
    The category $\AtoMon$ is complete.
\end{theorem}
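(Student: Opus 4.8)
The plan is to deduce completeness from the dual-free direction of the standard recognition criterion: a category is complete if and only if it has all small products (the empty product included, which is precisely a terminal object) together with equalizers of every pair of parallel arrows, as recorded in \cite[Theorem~2.8.1]{Borceux}. All three ingredients have already been assembled in the preceding sections, so the proof is essentially an assembly step.

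Concretely, I would first observe that Proposition~\ref{prop: initial and terminal} exhibits the terminal object $\mathbb{1}=\{1,0,a\}$ of $\AtoMon$, which handles the empty product; that Theorem~\ref{thm:non-empty-products} produces the product $(\PA,\{\pi_i\}_{i\in I})$ of any family $\mathcal H=\{H_i\}_{i\in I}$ indexed by a non-empty set $I$, so $\AtoMon$ has all (small) nonempty products; and that Proposition~\ref{prop: equalizers} constructs the equalizer $E(f,g)\hookrightarrow H$ of an arbitrary parallel pair $f,g\colon H\rightrightarrows K$ in $\AtoMon$. Having all small products and all equalizers, $\AtoMon$ satisfies the hypotheses of \cite[Theorem~2.8.1]{Borceux}, and is therefore complete. (One could equally cite the existence of a terminal object together with all pullbacks, via Proposition~\ref{prop: pullbacks}, but the products-and-equalizers route is the most direct given what has been proved.)

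I do not anticipate any genuine obstacle: the substance lies entirely in Theorem~\ref{thm:non-empty-products} and Proposition~\ref{prop: equalizers}, which are already established. The only point meriting a word of care is the empty-limit case — the terminal object is \emph{not} the one from $\Mon$, which is exactly why Proposition~\ref{prop: initial and terminal} was singled out — and I would make sure the statement of the theorem (and any remark following it) flags that limits in $\AtoMon$ differ from those in $\Mon$, in contrast to the colimit situation settled in Theorem~\ref{thm: cocomplete}.
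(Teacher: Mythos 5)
Your proposal is correct and is essentially the paper's own argument: the paper likewise invokes \cite[Theorem~2.8.1]{Borceux} together with Theorem~\ref{thm:non-empty-products} and Proposition~\ref{prop: equalizers}. Your additional care in explicitly citing Proposition~\ref{prop: initial and terminal} for the empty product (the terminal object) is a slight refinement of the same route, since the paper's product theorem only covers non-empty index sets.
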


\begin{proof}
We know from Theorem \ref{thm:non-empty-products} and Proposition \ref{prop: equalizers} that $\AtoMon$ has arbitrary products and equalizers. Thus, we conclude from \cite[Theorem~2.8.1]{Borceux} that it has all limits, namely, it is complete.
\end{proof}

\section{Prospects for future research}\label{sec: open problems}

The results in this paper reveal intriguing properties of the category $\AtoMon$ of atomic monoids and atom-preserving monoid homomorphisms, highlighting its potential to advance factorization theory from a category-theoretic perspective. While our findings mark only a first step in this direction, they provide a foundation for further research and suggest promising developments. A key question concerns the role of Theorems \ref{thm: s.s.o.l. coproduct} and Theorems \ref{thm: s.s.o.l. product} in realizing systems of length sets. Another problem is to obtain a (sensible) categorical characterization of transfer homomorphisms \cite[Definition 2.1]{Ba-Sm15}, a special class of monoid homomorphisms that are central to the classical theory of factorization. 

A possible extension is to consider factorizations where the ``building blocks'' are irreducibles in the sense of \cite{Cos-Tri-2025, Cos-2025} rather than atoms. Such an idea leads to the introduction of the category 
$\FaMon$ of factorable monoids and irreducible-preserving monoid homomorphisms, where a monoid $H$ is \evid{factorable} if every non-unit is a product of irreducibles, and an \evid{irreducible} of $H$ is a non-unit $a \in H$ such that $a \ne xy$ for all \textit{proper} divisors $x,y \in H$ of $a$ (that is, if $a = xy$, then $a \mid_H x$ or $a \mid_H y$). This requires an analysis of limits and colimits, as well the study of arithmetic properties of universal constructions in $\FaMon$.

A further generalization shifts focus to \evid{premonoids}, that is, structures of the form $(H, \preceq)$ where $H$ is a monoid and $\preceq$ is a preorder (i.e., a reflexive and transitive relation) on $H$. The goal is to build a ``category of factorable premonoids'', where factorizations arise in the broader framework of \cite{Tri-2022(a), Cos-Tri-2021, Cos-Tri-2023, Cos-Tri-2024}. One problem is that, in this setting, it is not quite clear what the ``right morphisms'' should be.

\section*{Acknowledgments}

F.~Campanini is a postdoctoral researcher of the Fonds de la Recherche Scientifique (FNRS).
L.~Cossu acknowledges support by the European Union's Horizon 2020 program (Marie Sklodowska-Curie grant 101021791), the Austrian Science Fund FWF (grant DOI 10.55776/P\-AT\-975\-6623), and the project ``FIATLUCS'' funded by the PNRR RAISE Liguria, Spoke 01, CUP: F23C24000240006; she is a member of the National Group for Algebraic and Geometric Structures and their Applications (GNSAGA), a department of the Italian Mathematics Research Institute (INdAM). S.~Tringali was supported by grant A2023205045, funded by the Natural Science Foun\-da\-tion of Hebei Province.

\end{document}